\theoremstyle{plain}
\newtheorem{thm}{Theorem}[section]
  \theoremstyle{definition}
  \newtheorem{defn}[thm]{Definition}
  \theoremstyle{remark}
  \newtheorem{rem}[thm]{Remark}
  \theoremstyle{plain}
  \newtheorem{lem}[thm]{Lemma}
  \theoremstyle{plain}
  \newtheorem{prop}[thm]{Proposition}
 \theoremstyle{definition}
  \newtheorem{example}[thm]{Example}
  \theoremstyle{plain}
  \newtheorem{cor}[thm]{Corollary}
\def\N{\mathbb{N}}
\def\No{{\mathbb{N}_0}}
\def\R{\mathbb{R}}
\def\Z{\mathbb{Z}}
\def\C{\mathbb{C}}
\def\Nn{{\mathbb{N}^n}}
\def\Non{{\mathbb{N}_0^n}}
\def\Rn{{\mathbb{R}^n}}
\def\Zn{{\mathbb{Z}^n}}
\def\Cn{\mathbb{C}^n}
\begin{document}
\global\long\def\N{\mathbb{N}}
 \global\long\def\No{\mathbb{N}_{0}}
 \global\long\def\R{\mathbb{R}}
 \global\long\def\Z{\mathbb{Z}}
 \global\long\def\C{\mathbb{C}}
 \global\long\def\Nn{\mathbb{N}^{n}}
 \global\long\def\Non{\mathbb{N}_{0}^{n}}
 \global\long\def\Rn{\mathbb{R}^{n}}
 \global\long\def\Zn{\mathbb{Z}^{n}}
 \global\long\def\Cn{\mathbb{C}^{n}}
 \global\long\def\K{\mathbb{K}}

\title{Compactness in quasi-Banach function spaces and applications to compact embeddings of Besov-type spaces\footnote{The research was partially supported by Grant no. P 201 13-14743S of the Grant Agency of the Czech Republic, by RVO: 67985840, by the Center for Research and Development in Mathematics and Applications (CIDMA) and the Portuguese Foundation for Science and Technology (FCT) within Project no. UID/MAT/04106/2013.}}

\author{António Caetano \\ \scriptsize{Center for R\&D in Mathematics and Applications,} \\[-1mm] \scriptsize{Department of Mathematics,
University of Aveiro, 3810-193 Aveiro, Portugal} \\[-1mm] \scriptsize{e-mail: \texttt{acaetano@ua.pt} (corresponding author, tel. +351\ 927994307)}   \and Amiran Gogatishvili \\ \scriptsize{Institute of Mathematics, Academy of Sciences of the Czech Re\-pu\-blic,} \\[-1mm] \scriptsize{\u Zitn\'a 25, 11567 Prague 1, Czech Republic} \\[-1mm] \scriptsize{e-mail: \texttt{gogatish@\allowbreak math.cas.cz}}  \and Bohumír Opic \\ \scriptsize{Department of Mathematical Analysis, Faculty of Mathematics and Physics,  Charles University,} \\[-1mm] \scriptsize{So\-ko\-lovsk\'a 83, 186 75  Prague~8, Czech Republic} \\[-1mm] \scriptsize{e-mail: \texttt{opic@\allowbreak karlin.mff.cuni.cz}} }

\date{}

\maketitle
\begin{abstract}
There are two main aims of the paper. The first one is to extend the criterion for the precompactness of sets in Banach function spaces to the setting of quasi-Banach function spaces. The second one is to extend the criterion for the precompactness of sets in the Lebesgue spaces $L_p(\Rn)$, $1 \leq p < \infty$, to the so-called power quasi-Banach function spaces.
These criteria are applied to establish compact embeddings of abstract Besov spaces into quasi-Banach function spaces. The results are illustrated on embeddings of Besov spaces $B^s_{p,q}(\Rn)$, $0<s<1$, $0<p,q\leq \infty$, into Lorentz-type spaces.

\medskip{}

\emph{Keywords:} quasi-Banach function space, compactness,
compact embedding, absolute continuity, Besov space, Lorentz space

\medskip{}

\emph{MSC 2010}: 46E30, 46E35, 46B42, 46B50

\medskip{}

\thanks{\copyright 2016. Licensed under the CC-BY-NC-ND 4.0 license \linebreak http://creativecommons.org/licenses/by-nc-nd/4.0/.}

\thanks{Formal publication: http://dx.doi.org/10.1017/S0308210515000761.}

\end{abstract}

\section{Introduction}

The well-known criterion for the precompactness of sets in a Banach function space states that a subset $K$ of the absolutely continuous part $X_a$ of a Banach function space $X$ is precompact in $X$ if and only if $K$ is locally precompact in measure and $K$ has uniformly absolutely continuous norm (cf. \cite[Chap. 1, Exercise 8]{BS88}).

Such a criterion was, for example, used in \cite{Pus06} to establish compact embeddings $W^kX(\Omega) \hookrightarrow \hookrightarrow Y(\Omega)$. Here $k\in \N$, $\Omega$ is a bounded domain in $\Rn$, $X(\Omega)$ and $Y(\Omega)$ are rearrangement-invariant Banach function spaces and $W^kX(\Omega)$ is the Sobolev space modelled upon the space $X(\Omega)$. Another paper using such a~criterion is, e.g., \cite{RafSam08}, where the authors applied it to get the so-called dominated compactness theorem for regular linear integral operators.

There is a natural question whether the above mentioned criterion characterizing precompact subsets in Banach function spaces can be extended to the setting of quasi-Banach function spaces even when elements of these spaces are not locally integrable (we refer to Section \ref{sec:3} for the definition of quasi-Banach function spaces). The positive answer is given in Theorem \ref{thm:theorem} below. In particular, when the given quasi-Banach function space is the Lebesgue space $L_{p}(\Rn)$ with $0<p<1$, we recover \cite[Lem. 1.1]{KZPS76}.

We also establish an extension of a criterion characterizing precompact sets in the Lebesgue space $L_p(\Rn)$, $1 \leq p < \infty$, (cf., e.g., \cite[Thm. 2.32]{AdaFou05} or \cite[Thm.~IV.8.21]{DS57}) to the case when the space $L_p(\Rn)$ is replaced by a power quasi-Banach function space over $\Rn$ (see Theorem \ref{thm:compactqBFS} and Remark \ref{rem:compactqBFS}; we refer to Definition~\ref{def:powerqBFS} for the notion of a power quasi-Banach function space). 

We apply our criteria to establish compact embeddings of abstract Besov spaces into quasi-Banach function spaces over bounded measurable subsets $\Omega$ of $\Rn$ (see Theorem \ref{thm:compactinL} and Corollary \ref{cor:KsubUAC(Z)}; abstract Besov spaces are introduced in Definition \ref{def:Besov}).

Finally, we illustrate our results on embeddings of Besov spaces $B^s_{p,q}(\Rn)$, $0<s<1$, $0<p,q\leq \infty$, into Lorentz-type spaces over bounded measurable subsets of $\Rn$ (see Theorem \ref{Besovcompact}; we refer to Section \ref{sec:2} for the definition of Lorentz-type spaces).


\section{Notation and Preliminaries}

\label{sec:2}

%
{}

For two non-negative expressions $\mathcal{A}$ and $\mathcal{B}$,
the symbol $\mathcal{A}\lesssim\mathcal{B}$ (or $\mathcal{A}\gtrsim\mathcal{B}$)
means that $\mathcal{A}\leq c\mathcal{B}$ (or $c\mathcal{A}\geq\mathcal{B}$
), where $c$ is a positive constant independent of appropriate quantities
involved in $\mathcal{A}$ and $\mathcal{B}$. If $\mathcal{A}\lesssim\mathcal{B}$
and $\mathcal{A}\gtrsim\mathcal{B}$, we write $\mathcal{A}\approx\mathcal{B}$
and say that $\mathcal{A}$ and $\mathcal{B}$ are \emph{equivalent}. 

Given a set $A$, its characteristic function is denoted by $\chi_{A}$.
For $a\in\Rn$ and $r\geq0$, the symbol $B(a,r)$ stands for
the closed ball in $\Rn$ centred at $a$ with the radius $r$. The
notation $|\cdot|_{n}$ is used for Lebesgue measure in $\Rn$.

\newpage
Let $(R,\mu)$ be a measure space (with a non-negative measure $\mu$).\footnote{Here we use notation 
from \cite{BS88}. To be more precise, instead of $(R,\mu)$, one should write $(R,\Sigma,\mu)$, where $\Sigma$ is a $\sigma$-algebra of $\mu$-measurable subsets of $R$.} 
The symbol $\mathcal{M}(R,\mu)$ is used to denote the family of all
complex-valued or extended real-valued $\mu$-measurable functions
defined $\mu$-a.e. on $R$. By $\mathcal{M}^{+}(R,\mu)$ we mean
the subset of $\mathcal{M}(R,\mu)$ consisting of those functions
which are non-negative $\mu$-~a.e. on $R$. If $R$ is a measurable
subset $\Omega$ of $\Rn$ and $\mu$ is the corresponding Lebesgue
measure, we omit the $\mu$ from the notation. Moreover, if $\Omega=(a,b)$,
we write simply $\mathcal{M}(a,b)$ and $\mathcal{M}^{+}(a,b)$ instead
of $\mathcal{M}(\Omega)$ and $\mathcal{M}^{+}(\Omega)$, respectively.
%
{} Finally, by $\mathcal{W}(\Omega)$ (or by $\mathcal{W}(a,b)$) we
mean the class of \emph{weight functions} on $\Omega$ (resp. on $(a,b)$),
consisting of all measurable functions which are positive a.e. on
$\Omega$ (resp. on $(a,b)$). A subscript $0$ is added to the previous
notation (as in $\mathcal{M}_{0}(\Omega)$, for example) if in the considered class one restricts to functions which are finite
a.e..

Given two quasi-normed spaces $X$ and $Y$, we write $X=Y$ (and
say that $X$ and $Y$ \emph{coincide}) if $X$ and $Y$ are equal in the
algebraic and the topological sense (their quasi-norms are equivalent).
%
{}

Let $p,q\in(0,\infty]$, let $\Omega$ be a measurable subset of $\Rn$
with $|\Omega|_{n}>0$ and let $w\in\mathcal{W}(0,|\Omega|_{n})$
be such that \begin{eqnarray}
B_{p,q;w}(t): & = & \|\tau^{1/p-1/q}\, w(\tau)\|_{q;(0,t)}\ <\ \infty\quad\text{for all \ensuremath{t\in(0,|\Omega|_{n})}}\label{eq:EGO(2.1)}\\
 &  & \mbox{(and also for \ensuremath{t=|\Omega|_{n}}when \ensuremath{|\Omega|_{n}<\infty})},\nonumber \end{eqnarray}
where $\|\cdot\|_{q;E}$ is the usual $L_{q}$-(quasi-)norm on the
measurable set $E$. The \emph{Lorentz-type space} $L_{p,q;w}(\Omega)$
consists of all (classes of) functions $f\in\mathcal{M}(\Omega)$
for which the quantity \begin{equation}
\|f\|_{p,q;w;\Omega}:=\|t^{1/p-1/q}\, w(t)\, f^{*}(t)\|_{q;(0,|\Omega|_{n})}\label{eq:EGO(2.2)}\end{equation}
is finite; here $f^{*}$ denotes the \emph{non-increasing rearrangement}
of $f$ given by \begin{equation}
f^{*}(t)=\inf\{\lambda>0:|\{x\in\Omega:\ |f(x)|>\lambda\}|_{n}\leq t\},\quad t\geq0.\label{eq:EGO(2.3)}\end{equation}

\noindent We shall also need the inequality (cf. \cite[p. 41]{BS88})
\begin{equation}
(f+g)^*(t) \leq f^*(t/2)+g^*(t/2), \quad t \geq 0,
\label{eq:subadditivity}
\end{equation}
\noindent and the maximal function $f^{**}$ of $f^{*}$ defined
by 
\[
f^{**}(t)=\frac{1}{t}\int_{0}^{t}f^{*}(s)\, ds,\quad t>0.
\]
One can show that the functional \eqref{eq:EGO(2.2)} is a \emph{quasi-norm} on
$L_{p,q;w}(\Omega)$ \emph{if and only if} the function $B_{p,q;w}$
given by \eqref{eq:EGO(2.1)} satisfies 
\begin{equation}
B_{p,q;w}\in\Delta_{2},\label{eq:EGO(2.5)}
\end{equation}
that is, $B_{p,q;w}(2t)\lesssim B_{p,q;w}(t)$ for all $t\in(0,|\Omega|_{n}/2)$. This follows, e.g., from \cite[Cor. 2.2]{CS93}
if $q\in(0,\infty)$. When $q=\infty$, then arguments similar to those used in the proof of \cite[Cor. 2.2]{CS93} together with inequality (\ref{eq:subadditivity}) 
and the fact that
\begin{equation}
\| f \|_{p,\infty;w;\Omega} = \| B_{p,\infty;w}(t) f^*(t) \|_{\infty,(0,|\Omega|_n)} \quad \mbox{for all } f \in L_{p,\infty;w}(\Omega)
\label{eq:extra}
\end{equation}
yield the result. Note also that equality (\ref{eq:extra}) follows on interchanging the essential suprema on its right-hand side.
In particular, one can easily verify that (\ref{eq:EGO(2.5)}) is satisfied provided that
 \[
w(2t)\,\lesssim\, w(t)\quad\text{for a.e. \ensuremath{t\in(0,|\Omega|_{n}/2)}}.\]
 Moreover, since the relation $w\in\mathcal{W}(0,|\Omega|_{n})$ yields
$B_{p,q;w}(t)>0$ for all $t\in(0,|\Omega|_{n})$, one can prove that
the space $L_{p,q;w}(\Omega)$ is \emph{complete} when \eqref{eq:EGO(2.5)} is satisfied
(cf. \cite[Prop.~2.2.9]{CRS07}; if $q=\infty$ one makes use of (\ref{eq:extra}) again). %

If $q\in[1,\infty)$, the spaces $L_{p,q;w}(\Omega)$ are particular
cases of the \emph{classical Lorentz spaces} $\Lambda^{q}(\omega)$.
On the other hand, these Lorentz-type spaces contain as particular
cases a lot of well-known spaces such as Lebesgue spaces $L_{p}(\Omega)$ and
Lorentz spaces $L_{p,q}(\Omega)$, among others.

%
{}

If $\Omega=\Rn$, we sometimes omit this symbol in the notation and,
for example, simply write $\|\cdot\|_{p,q;w}$ or $L_{p,q;w}$ instead
of $\|\cdot\|_{p,q;w;\Rn}$ or $L_{p,q;w}(\Rn)$, respectively. 

\inputencoding{latin1}%
{}\inputencoding{latin9}\foreignlanguage{english}{}%
{}

\selectlanguage{english}%
%
{}

\section{A compactness criterion in quasi-Banach function spaces}

\label{sec:3}

In what follows, the symbol $(R,\mu)$ stands for a totally $\sigma$-finite
measure space and in $\mathcal{M}_{0}=\mathcal{M}_{0}(R,\mu)$ we consider the topology of convergence in measure on sets of finite measure
(see \cite[p.~3]{BS88}), which we briefly refer to as the topology
of local convergence in measure.
\begin{defn}
\label{def:wfq-n}A mapping $\rho:\mathcal{M}^{+}(R,\mu)\rightarrow[0,\infty]$
is called a \emph{function quasi-norm} if there exists a constant
$C\in[1,\infty)$ such that, for all $f$, $g$, $f_{k}$ ($k\in\N)$
in $\mathcal{M}^{+}(R,\mu)$, for all constants $a\geq0$ and for
all $\mu$-measurable subsets $E$ of $R$, the following properties
hold:

\textbf{(P1)} $\rho(f)=0\Leftrightarrow f=0\ \mu$-a.e.; \quad{}$\rho(af)=a\rho(f)$;\quad{}$\rho(f+g)\leq C\,(\rho(f)+\rho(g))$; 

\textbf{(P2)} $g\leq f\;\mu$-a.e. $\Rightarrow\;\rho(g)\leq\rho(f)$;

\textbf{(P3)} $f_{k}\uparrow f\;\mu$-a.e. $\Rightarrow\;\rho(f_{k})\uparrow\rho(f)$;

\textbf{(P4)} $\mu(E)<\infty\Rightarrow\rho(\chi_{E})<\infty$.

\end{defn}
\smallskip{}

\begin{defn}
\label{def:wq-BFS}Let $\rho:\mathcal{M}^{+}(R,\mu)\rightarrow[0,\infty]$
satisfy properties \textbf{(P1)-(P3)} of Definition
\ref{def:wfq-n}. The collection $X=X(\rho)$ of all functions $f\in\mathcal{M}(R,\mu)$
for which $\|f\|_{X}:=\rho(|f|)<\infty$ is called a \emph{quasi-Banach
function lattice} (\emph{q-BFL}, for short) over $(R,\mu)$ (or, simply,
over $R$ if $\mu$ is clearly meant). If, in addition, $\rho$ is a function
quasi-norm, then we also call $X(\rho)$ a \emph{quasi-Banach
function space }(\emph{q-BFS}, for short) over $(R,\mu)$ (or, simply,
over $R$).\end{defn}

In what follows we shall use the fact that in any quasi-normed linear space $(X,\|\cdot\|_X)$ there is a $\lambda$-norm $|||\cdot|||$ (with $\lambda \in (0,1]$ satisfying $(2C)^\lambda=2$, where $C$ is from Definition \ref{def:wfq-n}) such that 
\begin{equation}
|||f|||\approx\|f\|_{X}\quad \text{for all}\ \ f \in X 
\label{eq:equiv}
\end{equation}
-- cf., e.g., \cite[p. 2]{ET96} and \cite[p. 20]{DeVoLo93}. This result goes back to \cite{Aoki42} and \cite{Ro57} -- see also \cite[pp. 66-67]{Pie07}. 
Recall that the $\lambda$-norm $|||\cdot|||$ , $\lambda \in (0,1]$, satisfies, for all $f, g \in X$ and all scalars $\alpha$,
\[ |||f||| = 0 \quad\text {if and only if} \quad f=0,\]
\[ |||\alpha f||| = |\alpha|\ |||f|||,\] 
\begin{equation}\label{triangleineq}
|||f + g|||^{\lambda} \le |||f|||^{\lambda} + |||g|||^{\lambda} . 
\end{equation}
\begin{lem}
\label{star}
Let $X=X(\rho)$ be a q-BFL. Then $X \subset \mathcal{M}_0$ and under the natural vector space operations $(X,\|\cdot\|_{X})$ is indeed a quasi-normed linear space. Moreover, $X \hookrightarrow \mathcal{M}_0$. In particular, if $f_k\underset{k}{\longrightarrow} f$ in $X$, then $f_k\underset{k}{\longrightarrow} f$ locally in measure, and hence some subsequence converges pointwise $\mu$-a.e. to $f$.
\end{lem}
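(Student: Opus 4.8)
The plan is to establish the four assertions in turn, with the continuity of the embedding as the substantial point. The inclusion $X \subset \mathcal{M}_0$ is immediate: if $f \in X$ and $E := \{x : |f(x)| = \infty\}$ had positive measure, then $n\chi_E \leq |f|$ for every $n$ would give $n\,\rho(\chi_E) \leq \rho(|f|) = \|f\|_X < \infty$ by (P2), while $\rho(\chi_E) > 0$ by (P1); letting $n \to \infty$ is absurd, so $f$ is finite $\mu$-a.e.. For the linear structure, note that $f, g \in X$ are finite $\mu$-a.e., so $f + g$ is defined $\mu$-a.e., and $|f+g| \leq |f| + |g|$ together with (P2) and the quasi-triangle inequality of (P1) yields $\|f+g\|_X \leq C(\|f\|_X + \|g\|_X) < \infty$; closure under scalars and the three quasi-norm axioms are read off directly from (P1)-(P2).

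For $X \hookrightarrow \mathcal{M}_0$ I would use that the topology of local convergence in measure is metrizable and linear, so that continuity of the (linear) embedding is equivalent to the implication: $\|g_k\|_X \to 0$ entails $g_k \to 0$ locally in measure. Fixing a set $E$ with $\mu(E) < \infty$ and $\varepsilon > 0$ and putting $E_k := \{x \in E : |g_k(x)| > \varepsilon\}$, the pointwise bound $\varepsilon \chi_{E_k} \leq |g_k|$ and (P2) give $\varepsilon\,\rho(\chi_{E_k}) \leq \|g_k\|_X \to 0$, so $\rho(\chi_{E_k}) \to 0$. Everything then hinges on promoting this to $\mu(E_k) \to 0$.

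This promotion is the main obstacle, because for a q-BFL we have at our disposal neither (P4) nor any countable subadditivity of $\rho$. I would argue by contradiction. If $\mu(E_k) \not\to 0$, pass to a subsequence with $\mu(E_{k_j}) \geq \delta > 0$; since $\rho(\chi_{E_{k_j}}) \to 0$ and $|||\cdot||| \approx \|\cdot\|_X$ by \eqref{eq:equiv}, thin it further so that $\sum_j |||\chi_{E_{k_j}}|||^\lambda < \infty$. Set $G_N := \bigcup_{j \geq N} E_{k_j} \subset E$. Combining $\chi_{\bigcup_{j=N}^M E_{k_j}} \leq \sum_{j=N}^M \chi_{E_{k_j}}$ with (P2), the $\lambda$-power subadditivity of $|||\cdot|||$, and the Fatou property (P3) as $M \to \infty$, one obtains $\rho(\chi_{G_N})^\lambda \lesssim \sum_{j \geq N} |||\chi_{E_{k_j}}|||^\lambda \to 0$. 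Since $\chi_{\bigcap_N G_N} \leq \chi_{G_N}$, monotonicity (P2) forces $\rho(\chi_{\bigcap_N G_N}) = 0$, hence $\mu(\bigcap_N G_N) = 0$ by (P1); but $G_N \subset E$ has finite measure, so continuity of $\mu$ from above gives $\mu(\bigcap_N G_N) = \lim_N \mu(G_N) \geq \delta$, a contradiction. Thus $\mu(E_k) \to 0$ and $g_k \to 0$ locally in measure.

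The final clause is then the classical Riesz argument: by total $\sigma$-finiteness write $R = \bigcup_m R_m$ with $\mu(R_m) < \infty$; convergence of $f_k$ to $f$ locally in measure yields, on each $R_m$, a subsequence converging $\mu$-a.e., and a diagonal extraction produces one subsequence converging $\mu$-a.e. on all of $R$.
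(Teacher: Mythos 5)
Your proof is correct, and while it shares the paper's overall skeleton, the mechanism that produces the contradiction is genuinely different. Both proofs reduce to sequential continuity at $0$, suppose $\mu(E_k)\not\to 0$, and pass to a subsequence with $\mu(E_{k_j})\ge\delta$ whose characteristic functions have summable $\lambda$-norms via \eqref{eq:equiv}. The paper then studies the increasing \emph{sums} $\sum_{k=1}^N \chi_{E_k}$: the $\lambda$-triangle inequality bounds them in $X$ uniformly in $N$, while Egorov's theorem plus the estimate $\int_{E'} f\,d\mu > N\delta/2$ shows their pointwise limit $f$ lies outside $\mathcal{M}_0$, so that \textbf{(P3)} forces $\bigl\|\sum_{k=1}^N\chi_{E_k}\bigr\|_X\uparrow\rho(f)=\infty$ --- boundedness against divergence, organized as a nested double contradiction. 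You instead study the \emph{tail unions} $G_N=\bigcup_{j\ge N}E_{k_j}$: dominating $\chi_{G_N}$ by tail sums and combining \textbf{(P2)}, the $\lambda$-inequality and \textbf{(P3)}, you get $\rho(\chi_{G_N})\to 0$, hence the set $\bigcap_N G_N$ is $\mu$-null by \textbf{(P1)}, whereas continuity of $\mu$ from above inside the finite-measure set $E$ gives $\mu\bigl(\bigcap_N G_N\bigr)\ge\delta$ --- two incompatible measures of a single set, one contradiction. Your route dispenses with Egorov's theorem and with any integration, using only \textbf{(P1)}--\textbf{(P3)}, \eqref{eq:equiv} and elementary measure theory, so it is the more economical argument; the paper's version follows the classical template of \cite{KPS82} but gains nothing in generality, since both proofs consume exactly the same hypotheses. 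Your remaining steps (the inclusion $X\subset\mathcal{M}_0$, the quasi-norm axioms, and the diagonal extraction of an a.e.-convergent subsequence over an exhaustion $R=\bigcup_m R_m$) coincide with the paper's argument or with the standard facts it invokes.
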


\begin{proof}
Given any $f \in X$, the set $A$ in which $f$ is infinite has a measure 0. Indeed, as $N \chi_A \leq |f|$ for any $N \in \N$, properties \textbf{(P1)} and \textbf{(P2)} imply that
$$N \rho(\chi_A) = \rho(N \chi_A) \leq \rho(|f|) < \infty,$$
and thus $\rho(\chi_A)=0$. Together with \textbf{(P1)}, this shows that $\chi_A=0$ $\mu$-a.e.. 

Since $\mu(A)=0$, $X$ is a subspace of $\mathcal{M}_0$ and so $X$ inherits the vector space operations from $\mathcal{M}_0$ (where, as usual, any two functions coinciding $\mu$-a.e.
are identified). Moreover, by \textbf{(P1)}, the space $(X,\|\cdot\|_{X})$ is a quasi-normed linear space.

It remains to prove the continuous embedding $X \hookrightarrow \mathcal{M}_0$. This can be done using some ideas of \cite[Chap. II, Thm. 1, pp. 41-42]{KPS82}. However, since our setting and that of \cite{KPS82} are different, we prove it here for the convenience of the reader.

It is sufficient to show that the condition $\| f_k \|_X \underset{k}{\longrightarrow} 0$ implies the convergence of $\{f_k\}_k$ to zero in $\mathcal{M}_0$. 

On the contrary, assume that there are a set $E$, with $0<\mu(E)<\infty$, and \linebreak $\varepsilon >0$ such that $\mu\{x \in E : \; |f_k(x)|>\varepsilon\}$ fails to converge to 0 as $k \to \infty$. Then there exists $\delta>0$ and a subsequence $\{f_{\sigma(k)}\}_k$ such that the inequalities $|f_{\sigma(k)}(x)|>\varepsilon$ hold on sets $E_k \subset E$ satisfying $\mu(E_k)>\delta$ for all $k \in \N$. Hence, $\varepsilon \chi_{E_k}(x) \leq |f_{\sigma(k)}(x)|$, and, on using \textbf{(P2)} and \textbf{(P1)}, we obtain that $\varepsilon\| \chi_{E_k} \|_X \leq \| f_{\sigma(k)} \|_X$ for all $k \in \N$. Without loss of generality, we may assume that the sequence $\{\sigma(k)\}_k$ is chosen so that $\sum_{k=1}^\infty \| f_{\sigma(k)} \|_X^\lambda < \infty$, where $\lambda \in (0,1]$ corresponds to the $\lambda$-norm $|||\cdot|||$ considered in \eqref{eq:equiv}. Consequently, 
\begin{eqnarray}
\lefteqn{\varepsilon^\lambda \Big\| \sum_{k=1}^N \chi_{E_k} \Big\|_X^\lambda \approx \varepsilon^\lambda \Big|\Big|\Big| \sum_{k=1}^N \chi_{E_k} \Big|\Big|\Big|^\lambda \; \lesssim \; \varepsilon^\lambda \sum_{k=1}^N \| \chi_{E_k} \|_X^\lambda \nonumber} \nonumber \\
& \leq &  \sum_{k=1}^N \| f_{\sigma(k)} \|_X^\lambda \; \leq \; \sum_{k=1}^\infty \| f_{\sigma(k)} \|_X^\lambda \; < \; \infty \ \  \text{for all}\ N \in \N
\label{eq:star}
\end{eqnarray}
(we emphasize that constants hidden in symbols $\,\approx\,$ and $\,\lesssim\,$ are independent of $N$).
Thus, if we show that 
\begin{equation}
\Big\| \sum_{k=1}^N \chi_{E_k} \Big\|_X \underset{N}{\longrightarrow} \infty,
\label{eq:2star}
\end{equation}
we arrive at a contradiction and the proof will be complete.

In order to prove \eqref{eq:2star}, it is enough to verify that
\begin{equation}
\sum_{k=1}^N \chi_{E_k} \uparrow f \quad \mbox{with} \quad f \notin \mathcal{M}_0.
\label{eq:3star}
\end{equation}
Indeed, \eqref{eq:3star} and \textbf{(P3)} imply that $\| \sum_{k=1}^N \chi_{E_k} \|_X \uparrow \rho(f)$ and, since $f \notin \mathcal{M}_0 \supset X$, $\rho(f)=\infty$.
As it is obvious that the sequence $\{ \sum_{k=1}^N \chi_{E_k} \}_N$ is non-decreasing, all that remains to prove in order to establish \eqref{eq:3star} is that 
$f \notin \mathcal{M}_0$.

Again we proceed by contradiction and assume that $f \in \mathcal{M}_0$. Recalling that $E_k \subset E$, $k \in \N$, and $\mu(E)<\infty$, we see that we can use Egorov's Theorem to state that there exists a set $E' \subset E$, with $\mu(E \setminus E')<\frac{\delta}{2}$, on which the convergence of $\sum_{k=1}^N \chi_{E_k}$ to $f$ is uniform. As a consequence, the boundedness of each $\sum_{k=1}^N \chi_{E_k}$ on $E'$ implies that $f$ is bounded on $E'$, too. Thus,
\begin{equation}
\int_{E'} f\, d\mu < \infty.
\label{eq:mirek*}
\end{equation}
However, since the inequalities $\mu(E_k \setminus E')<\frac{\delta}{2}$ and $\mu(E_k)>\delta$ imply that $\mu(E'\cap E_k)>\frac{\delta}{2}$, $k \in \N$, we arrive at
\begin{equation*}
\int_{E'} f\, d\mu \geq \int_{E'} \sum_{k=1}^N \chi_{E_k}\, d\mu = \sum_{k=1}^N \mu(E'\cap E_k) > N \frac{\delta}{2}
\end{equation*}
for all $N \in \N$, which contradicts \eqref{eq:mirek*}.
\end{proof}

\begin{rem}
\label{rem:axioms}(i) In contrast to \cite[p.~9]{Sal08}, in our definition of a q-BFS we do not require {\em a priori} that $X \hookrightarrow \mathcal{M}_0$ and that $X$ is complete since these two properties are consequences of axioms \textbf{(P1)-(P4)}(cf. Lemma \ref{star} and Lemma~\ref{lem:completude} below).

(ii) Recall that (cf. \cite{BS88}) a \emph{function norm} is a mapping
$\rho:\mathcal{M}^{+}(R,\mu)\to[0,\infty]$ satisfying \textbf{(P1)}
with $C=1$, \textbf{(P2)-(P4)} and 

\textbf{(P5)} $\mu(E)<\infty\Rightarrow\int_{E}f\, d\mu\leq c_{E}\,\rho(f)$
for some constant $c_{E}$, $0<c_{E}<\infty$, depending on $E$
and $\rho$ but independent of $f\in\mathcal{M}^{+}(R,\mu)$. 

\noindent Thus, any function norm is
a function quasi-norm. Hence, taking a function norm $\rho$ and defining
the Banach function space (BFS) as the family of those $f\in\mathcal{M}(R,\mu)$
for which $\rho(|f|)<\infty$, we see that any BFS is a q-BFS. 
\end{rem}

The first example of q-BFS (and, \emph{a fortiori}, of q-BFL) is the
Lebesgue space $L_{p}(\Omega)$ with $0<p\leq\infty$, which is
also a BFS when $p\geq1$. The next example of q-BFS is the Lorentz-type
space $L_{p,q;w}(\Omega)$ %
 introduced in Section \ref{sec:2} provided that \eqref{eq:EGO(2.1)}
and \eqref{eq:EGO(2.5)} hold. (In this case $R=\Omega$, $\mu$ is
the Lebesgue measure on $\Omega$ and $\rho=\|\cdot\|_{p,q;w;\Omega}$.)
Indeed, property \textbf{(P1)} follows from what has been said in
Section \ref{sec:2} and properties \textbf{(P2)-(P4)}
are easy consequences of the definition of such
spaces. 

\textbf{\smallskip{}
}

The next result follows by the same arguments as \cite[Chap.~1, Lem.~1.5]{BS88}.
\begin{lem}
\label{lem:fatou}Let $X=X(\rho)$ be a q-BFL and assume that $f_{k}\in X$,
$k\in\N$. 

\textbf{\emph{(i)}} If $0\leq f_{k}\uparrow f\;\mu$-a.e., then either
$f\notin X$ and $\|f_{k}\|_{X}\uparrow\infty$, or $f\in X$ and
$\|f_{k}\|_{X}\uparrow\|f\|_{X}$. 

\textbf{\emph{(ii)}} \emph{(Fatou's lemma)} If $f_{k}\rightarrow f\;\mu$-a.e.,
and if $\liminf_{k\to\infty}\|f_{k}\|<\infty$, then $f\in X$ and
$\|f\|_{X}\leq\liminf_{k\to\infty}\|f_{k}\|_{X}$.
\end{lem}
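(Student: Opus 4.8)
The plan is to argue directly with the function quasi-norm $\rho$ rather than with $\|\cdot\|_X$, using that for any non-negative $g\in\mathcal{M}(R,\mu)$ one has $\|g\|_X=\rho(g)$ and that $g\in X$ if and only if $\rho(g)<\infty$. The essential observation is that both assertions depend only on the monotonicity property \textbf{(P2)} and the Fatou-type property \textbf{(P3)}, and on nothing resembling a triangle inequality. Consequently the quasi-Banach setting behaves exactly as the Banach one, which is precisely why the proof can follow \cite[Chap.~1, Lem.~1.5]{BS88}.

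Part \emph{(i)} I would settle immediately. From $0\le f_k\uparrow f$ $\mu$-a.e.\ and \textbf{(P3)} applied to $\rho$ we obtain $\rho(f_k)\uparrow\rho(f)$; since each $f_k\ge0$ lies in $X$, we have $\rho(f_k)=\|f_k\|_X<\infty$. If $\rho(f)=\infty$, then $f\notin X$ and $\|f_k\|_X\uparrow\infty$; if $\rho(f)<\infty$, then $f\in X$ with $\|f\|_X=\rho(f)$ and $\|f_k\|_X\uparrow\|f\|_X$. This is exactly the stated dichotomy, so (i) costs essentially nothing beyond \textbf{(P3)}.

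For part \emph{(ii)} I would introduce the auxiliary sequence $g_m:=\inf_{k\ge m}|f_k|$. These functions are non-negative and non-decreasing in $m$, and $g_m\uparrow\liminf_k|f_k|=|f|$ $\mu$-a.e., because $f_k\to f$ a.e.\ forces $|f_k|\to|f|$ a.e. Since $g_m\le|f_m|$ with $f_m\in X$, property \textbf{(P2)} gives $\rho(g_m)\le\rho(|f_m|)<\infty$, so $g_m\in X$ and part \emph{(i)} is applicable to $\{g_m\}_m$; moreover $g_m\le|f_k|$ for every $k\ge m$ yields, again by \textbf{(P2)}, the bound $\rho(g_m)\le\inf_{k\ge m}\|f_k\|_X$. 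Letting $m\to\infty$ and invoking part \emph{(i)} gives $\rho(|f|)=\lim_m\rho(g_m)\le\liminf_k\|f_k\|_X<\infty$, which shows simultaneously that $f\in X$ and that $\|f\|_X=\rho(|f|)\le\liminf_k\|f_k\|_X$.

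There is no serious obstacle here: the whole point is that \textbf{(P2)} and \textbf{(P3)} alone suffice, so the upgrade from Banach to quasi-Banach function lattices is free of charge. The only step that demands a moment of care is verifying that the auxiliary functions $g_m$ genuinely belong to $X$ -- rather than being merely $\mu$-measurable -- so that part \emph{(i)} may legitimately be applied to $\{g_m\}_m$; this is exactly what the comparison $g_m\le|f_m|$ together with \textbf{(P2)} secures.
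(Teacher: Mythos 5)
Your proposal is correct and coincides with the paper's own proof, which is simply the observation that the lemma ``follows by the same arguments as \cite[Chap.~1, Lem.~1.5]{BS88}'': part (i) directly from \textbf{(P3)}, and part (ii) via the auxiliary sequence $g_m=\inf_{k\ge m}|f_k|$ combined with \textbf{(P2)} and part (i). Your remark that only \textbf{(P2)} and \textbf{(P3)} are used, so the quasi-triangle inequality plays no role and the Banach-case argument transfers verbatim, is precisely the point the paper is relying on.
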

We shall also need the following counterpart of \cite[Chap. 1, Thm. 1.6]{BS88},
where the number $\lambda\in(0,1]$ corresponds to the $\lambda$-norm $|||\cdot|||$ appearing in \eqref{eq:equiv}.

\begin{lem}
\label{lem:completude} Let $X=X(\rho)$ be a q-BFL. Assume that $f_{k}\in X$,
$k\in\N$, and that\begin{equation}
\sum_{k=1}^{\infty}\|f_{k}\|_{X}^{\lambda}<\infty.\label{eq:hypoth}\end{equation}
Then $\sum_{k=1}^{\infty}f_{k}$ converges in $X$ to a function $f\in X$
and\begin{equation}
\|f\|_{X}\lesssim\left(\sum_{k=1}^{\infty}\|f_{k}\|_{X}^{\lambda}\right)^{1/\lambda}.\label{eq:domination}\end{equation}
In particular, $X$ is complete.\end{lem}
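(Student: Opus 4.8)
The plan is to mimic the classical completeness argument for Banach function spaces from \cite[Chap.~1, Thm.~1.6]{BS88}, replacing the triangle inequality by the $\lambda$-norm $|||\cdot|||$ from \eqref{eq:equiv} and invoking the monotone-convergence results already at our disposal. First I would pass to the equivalent $\lambda$-norm: since $|||\cdot|||^\lambda$ is subadditive, the hypothesis \eqref{eq:hypoth} together with \eqref{eq:equiv} gives $\sum_{k=1}^\infty |||f_k|||^\lambda < \infty$, so it suffices to establish convergence and the domination estimate with $|||\cdot|||$ in place of $\|\cdot\|_X$, the general statement following by equivalence of the two functionals.

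The key step is to control the partial sums by a \emph{pointwise} monotone limit so that property \textbf{(P3)} (equivalently Lemma \ref{lem:fatou}(i)) applies. To this end I would set $g := \sum_{k=1}^\infty |f_k|$ and $g_N := \sum_{k=1}^N |f_k|$, so that $0 \leq g_N \uparrow g$ $\mu$-a.e. By the $\lambda$-subadditivity of $|||\cdot|||$ we have, for every $N$,
\[
|||g_N|||^\lambda \leq \sum_{k=1}^N |||f_k|||^\lambda \leq \sum_{k=1}^\infty |||f_k|||^\lambda < \infty.
\]
Hence $\sup_N |||g_N||| < \infty$, and by Lemma \ref{lem:fatou}(i) applied to the increasing sequence $g_N \uparrow g$ we conclude $g \in X$ with $|||g|||^\lambda \leq \sum_{k=1}^\infty |||f_k|||^\lambda$. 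In particular $g$ is finite $\mu$-a.e. (by Lemma \ref{star}, $X \subset \mathcal{M}_0$), so the series $\sum_{k=1}^\infty f_k$ converges absolutely $\mu$-a.e. to a function $f$ satisfying $|f| \leq g$; property \textbf{(P2)} then gives $f \in X$ and, reverting to $\|\cdot\|_X$ via \eqref{eq:equiv}, the domination bound \eqref{eq:domination}.

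It remains to verify convergence of the partial sums $s_N := \sum_{k=1}^N f_k$ to $f$ \emph{in the norm} of $X$, not merely $\mu$-a.e. I would estimate the tail: for $M > N$,
\[
|f - s_N| = \Big| \sum_{k=N+1}^\infty f_k \Big| \leq \sum_{k=N+1}^\infty |f_k| =: r_N \quad \mu\text{-a.e.},
\]
and by the same monotone-convergence argument $r_N \in X$ with $|||r_N|||^\lambda \leq \sum_{k=N+1}^\infty |||f_k|||^\lambda$. Property \textbf{(P2)} yields $|||f - s_N||| \leq |||r_N|||$, and the right-hand side is the tail of a convergent series, hence tends to $0$ as $N \to \infty$. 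This proves $s_N \to f$ in $X$. Finally, completeness of $X$ follows by the standard reduction: given a Cauchy sequence, extract a subsequence with $\sum_k \|g_{k+1} - g_k\|_X^\lambda < \infty$, apply the convergence just established to the telescoping series, and use the Cauchy property to upgrade subsequential convergence to convergence of the whole sequence. The main obstacle I anticipate is the bookkeeping of the $\lambda$-power throughout — making sure the subadditivity is applied to $|||\cdot|||^\lambda$ rather than $|||\cdot|||$ at each stage — but this is routine once the equivalence \eqref{eq:equiv} is invoked at the start.
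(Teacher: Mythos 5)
Your proof is correct, and its skeleton is the same as the paper's (both adapt \cite[Chap. 1, Thm. 1.6]{BS88}): bound the partial sums of $\sum_k |f_k|$ via the $\lambda$-norm, invoke monotone convergence (Lemma \ref{lem:fatou}(i)) to get the sum of absolute values into $X$, deduce $\mu$-a.e. convergence of $\sum_k f_k$ from $X \subset \mathcal{M}_0$, and finish completeness by the standard telescoping reduction for a Cauchy sequence. The genuine difference lies in how norm convergence and the bound \eqref{eq:domination} are obtained. The paper applies Fatou's lemma, i.e.\ Lemma \ref{lem:fatou}(ii), to $s_K - s_M \to f - s_M$ $\mu$-a.e. together with a $\liminf$ estimate on $\|s_K - s_M\|_X$, concluding $f - s_M \in X$ and $\|f - s_M\|_X \to 0$, and then derives \eqref{eq:domination} from the quasi-triangle inequality. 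You instead dominate pointwise, $|f| \le g$ and $|f - s_N| \le r_N$ with $r_N$ the tail series, and use only \textbf{(P2)} plus Lemma \ref{lem:fatou}(i) applied to the tails. Your route is slightly more elementary (part (ii) of Lemma \ref{lem:fatou} is never needed) and yields \eqref{eq:domination} immediately from $|f| \le g$; the paper's route gets $f - s_M \in X$ in one stroke without having to verify separately that each $r_N \in X$. One caveat you should make explicit: \textbf{(P2)} and Lemma \ref{lem:fatou} are stated for $\|\cdot\|_X = \rho(|\cdot|)$, not for the equivalent $\lambda$-norm $|||\cdot|||$, which need not inherit the lattice or Fatou properties; hence steps such as $|||g_N|||^\lambda \le \sum_k |||f_k|||^\lambda$ (via \textbf{(P3)}) and $|||f - s_N||| \le |||r_N|||$ (via \textbf{(P2)}) must be routed through the equivalence \eqref{eq:equiv} and weakened to $\lesssim$. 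Since your final conclusions --- the estimate \eqref{eq:domination} and the convergence $\|f - s_N\|_X \to 0$ --- are insensitive to multiplicative constants, this is harmless, but as written some of your intermediate inequalities claim more than the cited properties give.
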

\begin{proof}
If $t=\sum_{k=1}^{\infty}|f_{k}|$, $t_{K}=\sum_{k=1}^{K}|f_{k}|$,
$K\in\N$, then $0\leq t_{K}\uparrow t$. Since, by \eqref{eq:equiv} and \eqref{triangleineq},
\[
\|t_{K}\|_{X}\lesssim\left(\sum_{k=1}^{K}\|f_{k}\|_{X}^{\lambda}\right)^{1/\lambda}\quad \text{for all}\ \ K\in\N,\]
it follows from \eqref{eq:hypoth} and Lemma \ref{lem:fatou}(i) that
$t\in X$. In particular, since $X \subset \mathcal{M}_0$, $\sum_{k=1}^{\infty}|f_{k}(x)|$
converges pointwise $\mu$-a.e., and hence so does $\sum_{k=1}^{\infty}f_{k}(x)$.
Thus, if\[
f:=\sum_{k=1}^{\infty}f_{k},\quad s_{K}:=\sum_{k=1}^{K}f_{k},\quad K\in\N,\]
then $s_{K}\to f$ $\,\mu$-a.e.. Hence, given any $M\in\N$, $s_{K}-s_{M}\to f-s_{M}$
$\,\mu$-a.e. as $K\to\infty$. Moreover, \[
\liminf_{K\to\infty}\|s_{K}-s_{M}\|_{X}\lesssim\liminf_{K\to\infty}\left(\sum_{k=M+1}^{K}\|f_{k}\|_{X}^{\lambda}\right)^{1/\lambda}=\left(\sum_{k=M+1}^{\infty}\|f_{k}\|_{X}^{\lambda}\right)^{1/\lambda},\]
which tends to 0 as $M\to\infty$, because of hypothesis \eqref{eq:hypoth}.
Then, by Fatou's lemma (Lemma \ref{lem:fatou}(ii)), $f-s_{M}\in X$
(therefore also $f\in X$) and $\|f-s_{M}\|_{X}\to0$ as $M\to\infty$.
Finally,\[
\|f\|_{X}=\|f-s_{M}+\sum_{k=1}^{M}f_{k}\|_{X}\lesssim\left(\|f-s_{M}\|_{X}^{\lambda}+\sum_{k=1}^{M}\|f_{k}\|_{X}^{\lambda}\right){}^{1/\lambda}\]
and \eqref{eq:domination} follows by letting $M$ tend to infinity.

Completeness follows by standard arguments as in the normed case.
%
%
\end{proof}
As a consequence of this lemma, a q-BFL is a quasi-Banach space
(i.e., a~complete quasi-normed space).
\begin{defn}
A sequence $\{E_{n}\}_{n}$ of $\mu$-measurable subsets of a measure
space $(R,\mu)$ is said to converge $\mu$-a.e. to the empty set
(notation $E_{n}\underset{n}{\longrightarrow}\emptyset$ $\,\mu$-a.e.)
if $\cap_{n=1}^{\infty}\cup_{m=n}^{\infty}E_{m}$ is a set of $\mu$-measure
zero or, equivalently, if $\chi_{E_{n}}\underset{n}{\longrightarrow}0$
$\mu$-a.e.
\end{defn}
\smallskip{}

\begin{defn}
\label{def:abscont}A function $f$ in a q-BFL $X$ is said to have
\emph{absolutely continuous quasi-norm} if $\|f\chi_{E_{n}}\|_{X}\underset{n}{\longrightarrow}0$
for every sequence $\{E_{n}\}_{n=1}^{\infty}$ such that $E_{n}\underset{n}{\longrightarrow}\emptyset$
$\mu$-a.e.. The set of all functions in $X$ which have absolutely
continuous quasi-norm is denoted by $X_{a}$. If $X_{a}=X$, then
the space $X$ is said to have \emph{absolutely continuous
quasi-norm.}
\end{defn}
Analogously to the BFS case, one can prove the next assertion (cf. \cite[Chap. 1, Prop. 3.2]{BS88}.
\begin{prop}
\label{pro:decreasing} A function $f$ in a q-BFL $X$ has absolutely
continuous quasi-norm if and only if $\|f\chi_{E_{n}}\|_{X}\downarrow0$
for every sequence $\{E_{n}\}_{n=1}^{\infty}$ such that $E_{n}\downarrow\emptyset$
$\mu$-a.e. $($which means, besides $E_{n}\underset{n}{\longrightarrow}\emptyset$
$\mu$-a.e., that the sequence $\{E_{n}\}_{n=1}^{\infty}$ is non-increasing$)$.\end{prop}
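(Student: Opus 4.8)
The plan is to prove the two implications separately, mirroring the argument given for Banach function spaces in \cite[Chap.~1, Prop.~3.2]{BS88}; the only structural feature of the quasi-norm that will actually be used is the lattice property \textbf{(P2)}, so the quasi-triangle constant $C$ never enters and the quasi-Banach setting presents no additional difficulty over the Banach one.

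For the necessity (``only if'') direction I would argue as follows. Assume $f$ has absolutely continuous quasi-norm and let $\{E_{n}\}_{n}$ be any non-increasing sequence with $E_{n}\downarrow\emptyset$ $\mu$-a.e.. Such a sequence in particular satisfies $E_{n}\underset{n}{\longrightarrow}\emptyset$ $\mu$-a.e., so Definition~\ref{def:abscont} immediately yields $\|f\chi_{E_{n}}\|_{X}\underset{n}{\longrightarrow}0$. It remains only to upgrade this to monotone convergence: since $E_{n+1}\subset E_{n}$ forces $|f\chi_{E_{n+1}}|\leq|f\chi_{E_{n}}|$ $\mu$-a.e., property \textbf{(P2)} shows that $\{\|f\chi_{E_{n}}\|_{X}\}_{n}$ is non-increasing, and hence $\|f\chi_{E_{n}}\|_{X}\downarrow0$.

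The sufficiency (``if'') direction carries the content of the statement, and the idea is to reduce an arbitrary admissible sequence to a non-increasing one by passing to tails. Given $f\in X$ and any sequence $\{F_{n}\}_{n}$ with $F_{n}\underset{n}{\longrightarrow}\emptyset$ $\mu$-a.e., I would set $E_{n}:=\bigcup_{m=n}^{\infty}F_{m}$. These sets are measurable and manifestly non-increasing, and their intersection $\bigcap_{n=1}^{\infty}E_{n}=\bigcap_{n=1}^{\infty}\bigcup_{m=n}^{\infty}F_{m}$ is exactly the set whose $\mu$-measure is zero by the hypothesis $F_{n}\underset{n}{\longrightarrow}\emptyset$ $\mu$-a.e.; hence $E_{n}\downarrow\emptyset$ $\mu$-a.e.. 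Because $F_{n}\subset E_{n}$ gives $|f\chi_{F_{n}}|\leq|f\chi_{E_{n}}|$ $\mu$-a.e., property \textbf{(P2)} yields $\|f\chi_{F_{n}}\|_{X}\leq\|f\chi_{E_{n}}\|_{X}$, while applying the assumed monotone convergence to the non-increasing sequence $\{E_{n}\}_{n}$ gives $\|f\chi_{E_{n}}\|_{X}\downarrow0$. Combining the two yields $\|f\chi_{F_{n}}\|_{X}\underset{n}{\longrightarrow}0$, which is precisely absolute continuity of the quasi-norm of $f$ in the sense of Definition~\ref{def:abscont}.

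I do not anticipate a genuine obstacle: the argument is essentially identical to the Banach-space one, and the hard part, such as it is, is purely the bookkeeping of the tail-union construction (verifying monotonicity and the $\mu$-a.e.-empty limit directly from the definition of $E_{n}\downarrow\emptyset$). The one point worth stating explicitly is that each $\|f\chi_{E_{n}}\|_{X}$ is finite, which is immediate from $|f\chi_{E_{n}}|\leq|f|$ together with \textbf{(P2)}, since $f\in X$; this guarantees the hypothesis of the proposition is applicable to the constructed sequence $\{E_{n}\}_{n}$.
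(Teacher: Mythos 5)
Your proof is correct and is essentially the paper's own argument: the paper simply defers to the proof of \cite[Chap.~1, Prop.~3.2]{BS88}, which is exactly your tail-union construction $E_{n}:=\bigcup_{m=n}^{\infty}F_{m}$ combined with the lattice property \textbf{(P2)}, and as you note the quasi-triangle constant plays no role. Nothing is missing.
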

\begin{example}
If $0<p\leq \infty$, $0<q<\infty$ and the weight $w$ satisfies \eqref{eq:EGO(2.1)}
and \eqref{eq:EGO(2.5)}, then the space $L_{p,q;w}(\Omega)$ has absolutely continuous quasi-norm. This
follows immediately from Proposition~\ref{pro:decreasing} and the
Lebesgue dominated convergence theorem.
\end{example}
%
{}
\begin{prop}
\label{pro:closed}The set $X_{a}$ of functions from the q-BFL $X$
with absolutely continuous quasi-norm is a closed linear subspace
of $X$.\end{prop}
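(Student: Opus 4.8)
The plan is to establish two things: first that $X_a$ is a linear subspace, and then that it is closed in $X$. For the linear subspace part, I would argue directly from Definition \ref{def:abscont}. Given $f, g \in X_a$ and scalars $\alpha, \beta$, and any sequence $\{E_n\}_n$ with $E_n \longrightarrow \emptyset$ $\mu$-a.e., I would use the quasi-triangle inequality together with homogeneity (property \textbf{(P1)}) to write
\[
\|(\alpha f + \beta g)\chi_{E_n}\|_X \leq C\big(|\alpha|\,\|f\chi_{E_n}\|_X + |\beta|\,\|g\chi_{E_n}\|_X\big),
\]
where $C$ is the quasi-norm constant; since both terms on the right tend to $0$ as $n \to \infty$ by assumption, so does the left-hand side, giving $\alpha f + \beta g \in X_a$. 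This part is routine.

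The substantial part is closedness. The plan is to take a sequence $\{f_j\}_j \subset X_a$ converging in $X$ to some $f \in X$, and show $f \in X_a$. First I would fix an arbitrary sequence $\{E_n\}_n$ with $E_n \longrightarrow \emptyset$ $\mu$-a.e. and estimate, for each fixed $j$,
\[
\|f\chi_{E_n}\|_X \leq C\big(\|(f - f_j)\chi_{E_n}\|_X + \|f_j\chi_{E_n}\|_X\big) \leq C\big(\|f - f_j\|_X + \|f_j\chi_{E_n}\|_X\big),
\]
where the last step uses \textbf{(P2)} (since $|(f-f_j)\chi_{E_n}| \leq |f - f_j|$ pointwise). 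Passing to the $\limsup$ in $n$, the second term vanishes because $f_j \in X_a$, leaving $\limsup_{n} \|f\chi_{E_n}\|_X \leq C\|f - f_j\|_X$. Since this holds for every $j$ and $\|f - f_j\|_X \to 0$, the right-hand side can be made arbitrarily small, so $\limsup_n \|f\chi_{E_n}\|_X = 0$, i.e. $f \in X_a$.

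The main subtlety I expect is purely bookkeeping with the quasi-norm constant $C$: unlike the normed case there is a factor $C$ in each triangle-inequality step, so I must make sure the estimate is arranged so that a single application of the quasi-triangle inequality (not an iterated one with compounding constants) controls $\|f\chi_{E_n}\|_X$ by the two desired terms. The argument above does exactly this, since the $\limsup$ is taken before the limit in $j$, so no compounding of constants across $j$ occurs and $C$ remains a fixed finite constant throughout. No completeness or absolute-continuity machinery beyond the definition is needed; the key inputs are just the quasi-triangle inequality, homogeneity, and monotonicity \textbf{(P2)}.
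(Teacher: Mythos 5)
Your proof is correct and follows essentially the same route as the paper, which simply invokes the argument of \cite[Chap.~1, Thm.~3.8]{BS88}: the standard decomposition $\|f\chi_{E_n}\|_X \leq C\big(\|f-f_j\|_X + \|f_j\chi_{E_n}\|_X\big)$ via the quasi-triangle inequality and \textbf{(P2)}, with the limit in $n$ taken before the limit in $j$. Your explicit handling of the quasi-norm constant $C$ is exactly the (minor) adaptation needed to carry that Banach-space argument over to the q-BFL setting.
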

%
The proof follows by the same arguments as those of \cite[Chap. 1, Thm. 3.8]{BS88}.
\begin{defn}
\label{def:unifabscont}Let $X$ be a q-BFL and let $K\subset X_{a}$.
Then $K$ is said to have \emph{uniformly absolutely continuous quasi-norm
}in $X$ (notation $K\subset\mbox{UAC}(X)$) if, for every sequence
$\{E_{n}\}_{n=1}^{\infty}$ with $E_{n}\underset{n}{\longrightarrow}\emptyset$
$\mu$-a.e., \[
\forall\varepsilon>0,\,\exists N\in\mathbb{N}:\; f\in K,\, n\geq N\,\Rightarrow\|f\chi_{E_{n}}\|_{X}<\varepsilon.\]
\end{defn}

As in Definition \ref{def:abscont}, it is irrelevant in 
Definition \ref{def:unifabscont} whether we consider all sequences
of sets satisfying $E_{n}\underset{n}{\longrightarrow}\emptyset$ or $E_{n}\downarrow\emptyset$ $\mu$-a.e..

\bigskip

The following assertion is a consequence of Definition \ref{def:unifabscont}.
\begin{prop}
\label{pro:UACimplies}If $K\subset\mbox{UAC}\,(X)$, then 
\begin{equation}
\forall\varepsilon>0,\exists\delta>0:\, f\in K,\,\mu(E)<\delta\ \Rightarrow\,\|f\chi_{E}\|_{X}<\varepsilon.
\label{eq:simplerUAC}
\end{equation}
\end{prop}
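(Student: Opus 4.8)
The plan is to argue by contradiction. Suppose \eqref{eq:simplerUAC} fails. Then there is some $\varepsilon>0$ for which, for every $\delta>0$, one can find $f\in K$ and a $\mu$-measurable set $E$ with $\mu(E)<\delta$ but $\|f\chi_{E}\|_{X}\geq\varepsilon$. Applying this repeatedly with $\delta=2^{-n}$, $n\in\N$, I would produce functions $f_{n}\in K$ and $\mu$-measurable sets $E_{n}$ satisfying $\mu(E_{n})<2^{-n}$ and $\|f_{n}\chi_{E_{n}}\|_{X}\geq\varepsilon$ for every $n\in\N$.

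The crucial observation is that the sets $E_{n}$ converge $\mu$-a.e. to the empty set. Indeed, since $\sum_{n=1}^{\infty}\mu(E_{n})\leq\sum_{n=1}^{\infty}2^{-n}<\infty$, the Borel--Cantelli lemma yields $\mu\big(\cap_{n=1}^{\infty}\cup_{m=n}^{\infty}E_{m}\big)=0$, which is precisely the condition $E_{n}\underset{n}{\longrightarrow}\emptyset$ $\mu$-a.e. Thus the sequence $\{E_{n}\}_{n}$ is admissible in Definition \ref{def:unifabscont}.

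Now I would invoke the hypothesis $K\subset\mbox{UAC}(X)$ for this particular sequence and the given $\varepsilon$: there exists $N\in\N$ such that $\|f\chi_{E_{n}}\|_{X}<\varepsilon$ for every $f\in K$ and every $n\geq N$. Since this $N$ is uniform over all of $K$, I may in particular take $f=f_{n}\in K$ for any fixed $n\geq N$, obtaining $\|f_{n}\chi_{E_{n}}\|_{X}<\varepsilon$. This contradicts the inequality $\|f_{n}\chi_{E_{n}}\|_{X}\geq\varepsilon$ built into the construction, completing the argument. There is no genuine obstacle here; the only points requiring care are the choice of a \emph{summable} sequence of thresholds (here $2^{-n}$ rather than, say, $1/n$), so that Borel--Cantelli guarantees $E_{n}\to\emptyset$ $\mu$-a.e., and the observation that the integer $N$ furnished by $\mbox{UAC}(X)$ does not depend on the function, which is exactly what legitimizes the diagonal choice $f=f_{n}$.
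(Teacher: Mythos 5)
Your proof is correct and follows essentially the same route as the paper's: contradiction with thresholds $2^{-n}$, the observation that $\sum_{n}\mu(E_{n})<\infty$ forces $E_{n}\underset{n}{\longrightarrow}\emptyset$ $\mu$-a.e. (the paper writes the estimate $\mu(\cup_{n=m}^{\infty}E_{n})<2^{-m+1}$ directly rather than citing Borel--Cantelli, but it is the same computation), and then the uniformity of $N$ in Definition \ref{def:unifabscont} applied to $f=f_{n}$ to reach the contradiction.
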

\begin{proof}
Assume that $K\subset\mbox{UAC}(X)$ and that (\ref{eq:simplerUAC})
is false. Then there exists $\varepsilon>0$ such that for any $n\in\mathbb{N}$ there exists
$f_{n}\in K$ and $E_{n}\subset R$ satisfying \begin{equation}
\mu(E_{n})<2^{-n}\quad\mbox{and}\quad\|f_{n}\chi_{E_{n}}\|_{X}\geq\varepsilon.\label{eq:contradiction}\end{equation}
Hence, $\mu(\cup_{n=m}^{\infty}E_{n})\leq\sum_{n=m}^{\infty}\mu(E_{n})<2^{-m+1}$,
and thus $E_{n}\to\emptyset$ $\mu$-a.e.. Together with
the fact that $K\subset\mbox{UAC}(X)$, this shows that there is $N\in\mathbb{N}$
such that $\|f_{n}\chi_{E_{n}}\|_{X}<\varepsilon$ for all $n\geq N$.
However, this contradicts \eqref{eq:contradiction} and the result
follows.\end{proof}
\begin{rem}
\label{rem:converse}(i) When $R$ has finite measure, the converse
of the preceding result is also true.

(ii) For the sake of completeness, let us mention that, similarly
to Proposition~\ref{pro:UACimplies}, any function $f$ with \emph{absolutely
continuous quasi-norm} in a q-BFL $X$ satisfies
\[
\forall\varepsilon>0,\exists\delta>0:\,\mu(E)<\delta\ \Rightarrow\,\|f\chi_{E}\|_{X}<\varepsilon.
\]
 The converse is also true when $R$ has finite measure.\end{rem}
\begin{lem}
\label{lem:lemma}Let $X$ be a q-BFS. The sequence $\{f_{k}\}_{k}\subset X_{a}$
is convergent in $X$ if and only if $\{f_{k}\}_{k}$ converges locally
in measure and $\{f_{k}\}_{k}\subset\mbox{UAC}(X)$. Moreover, in
the case of convergence the two limits coincide.\end{lem}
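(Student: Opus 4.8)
The plan is to prove the two implications separately and to read off the coincidence of limits along the way.

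\emph{Necessity.} Suppose $f_{k}\to f$ in $X$. That $\{f_{k}\}_{k}$ converges locally in measure (to this same $f$) is immediate from the continuous embedding $X\hookrightarrow\mathcal{M}_{0}$ established in Lemma \ref{star}. For the uniform absolute continuity, I would first note that, since $X_{a}$ is closed (Proposition \ref{pro:closed}) and each $f_{k}\in X_{a}$, the limit $f$ also lies in $X_{a}$ and hence has absolutely continuous quasi-norm. Now fix a sequence $E_{n}\to\emptyset$ $\mu$-a.e.\ and $\varepsilon>0$. Writing $f_{k}\chi_{E_{n}}=(f_{k}-f)\chi_{E_{n}}+f\chi_{E_{n}}$ and using the quasi-triangle inequality (constant $C$) together with \textbf{(P2)} (so that $\|(f_{k}-f)\chi_{E_{n}}\|_{X}\leq\|f_{k}-f\|_{X}$), I obtain
\[
\|f_{k}\chi_{E_{n}}\|_{X}\leq C\big(\|f_{k}-f\|_{X}+\|f\chi_{E_{n}}\|_{X}\big).
\]
Choosing $K_{0}$ so that $\|f_{k}-f\|_{X}$ is small for $k\geq K_{0}$, and $N_{1}$ so that $\|f\chi_{E_{n}}\|_{X}$ is small for $n\geq N_{1}$ (possible because $f\in X_{a}$), the tail $k\geq K_{0}$ is controlled uniformly for $n\geq N_{1}$; the finitely many functions $f_{1},\dots,f_{K_{0}-1}$ are each in $X_{a}$, so enlarging $N_{1}$ to some $N$ handles them individually. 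This gives $\{f_{k}\}_{k}\subset\mbox{UAC}(X)$.

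\emph{Sufficiency.} Suppose now that $\{f_{k}\}_{k}\subset\mbox{UAC}(X)$ converges locally in measure to some $f$. The strategy is to show that $\{f_{k}\}_{k}$ is Cauchy in $X$: completeness (Lemma \ref{lem:completude}) then yields a limit $g\in X$, and Lemma \ref{star} forces $g=f$ $\mu$-a.e.\ by uniqueness of the local-in-measure limit, which simultaneously proves $f\in X$, that $f_{k}\to f$ in $X$, and the coincidence of the two limits. To establish the Cauchy property, fix $\varepsilon>0$. Using total $\sigma$-finiteness, write $R=\bigcup_{m}R_{m}$ with $R_{m}$ increasing of finite measure, so that $R\setminus R_{m}\downarrow\emptyset$ $\mu$-a.e.; by the uniform absolute continuity there is a finite-measure set $R_{0}=R_{M}$ with $\|f_{k}\chi_{R\setminus R_{0}}\|_{X}$ uniformly small in $k$, which controls $\|(f_{k}-f_{l})\chi_{R\setminus R_{0}}\|_{X}$ via the quasi-triangle inequality.

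It remains to estimate $\|(f_{k}-f_{l})\chi_{R_{0}}\|_{X}$ on the finite-measure set $R_{0}$, and this Vitali-type step is the main obstacle. I would split $R_{0}=A_{k,l}\cup B_{k,l}$ with $A_{k,l}=\{x\in R_{0}:|f_{k}(x)-f_{l}(x)|>\sigma\}$. On $B_{k,l}$ one has $|(f_{k}-f_{l})\chi_{B_{k,l}}|\leq\sigma\chi_{R_{0}}$, so by \textbf{(P2)} and \textbf{(P4)} this contribution is at most $\sigma\|\chi_{R_{0}}\|_{X}$, made small by taking $\sigma$ small. On $A_{k,l}$ I would use $\|(f_{k}-f_{l})\chi_{A_{k,l}}\|_{X}\leq C(\|f_{k}\chi_{A_{k,l}}\|_{X}+\|f_{l}\chi_{A_{k,l}}\|_{X})$ and the small-set form of uniform absolute continuity from Proposition \ref{pro:UACimplies}: there is $\delta>0$ with $\mu(E)<\delta\Rightarrow\|f_{k}\chi_{E}\|_{X}$ small uniformly in $k$. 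Since $f_{k},f_{l}\to f$ in measure on the finite-measure set $R_{0}$, we have $\mu(A_{k,l})\to0$ as $k,l\to\infty$, so $\mu(A_{k,l})<\delta$ for $k,l$ large and both terms are uniformly small. Collecting the three estimates (and absorbing the finitely many occurrences of $C$ into the choice of an auxiliary small parameter at the outset) yields $\|f_{k}-f_{l}\|_{X}<\varepsilon$ for all large $k,l$.

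The only genuinely delicate points are of ordering: the threshold $\sigma$ must be chosen \emph{after} $R_{0}$ is fixed (so that $\|\chi_{R_{0}}\|_{X}$ is a definite finite number), and the radius $\delta$ from Proposition \ref{pro:UACimplies} must be selected independently of $k,l$, so that the convergence in measure of $f_{k}-f_{l}$ to $0$ on $R_{0}$ can be invoked uniformly. Everything else is bookkeeping with the quasi-triangle constant.
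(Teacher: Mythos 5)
Your proof is correct, and while the necessity half matches the paper's, the sufficiency half takes a genuinely different route. For necessity, you and the paper use the same quasi-triangle splitting; the only variation is that you invoke the closedness of $X_a$ (Proposition \ref{pro:closed}) to place the limit $f$ in $X_a$, whereas the paper avoids this by a second application of the quasi-triangle inequality, comparing $f\chi_{E_m}$ with $f_N\chi_{E_m}$ for one fixed large index $N$; both are legitimate, as Proposition \ref{pro:closed} precedes this lemma. For sufficiency, the underlying decomposition is the same Vitali-type one (a tail set of the exhaustion, the set where the relevant difference exceeds a threshold, and its complement), but the paper estimates $\rho(|f_k-f|)$ \emph{directly} against the local-in-measure limit $f$: since it is not known a priori that $f\chi_{R\setminus P}$ and $f\chi_{E_j}$ lie in $X$, it must apply Fatou's lemma (Lemma \ref{lem:fatou}(ii)) twice, along an a.e.-convergent subsequence, to obtain those memberships with small quasi-norm. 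You instead work only with the differences $f_k-f_l$, which are already in $X$, prove the sequence is Cauchy, and then use completeness (Lemma \ref{lem:completude}) together with the embedding $X\hookrightarrow\mathcal{M}_0$ (Lemma \ref{star}) and uniqueness of local-in-measure limits to identify the $X$-limit with $f$. In effect you trade Fatou's lemma for completeness --- a clean and non-circular swap, since Lemma \ref{lem:completude} is established before and independently of this lemma (though in this paper completeness is itself derived from Fatou, so the dependence is hidden rather than removed). The paper's direct argument buys independence from completeness and from uniqueness of limits in $\mathcal{M}_0$, producing $f\in X$ and the convergence simultaneously; yours buys the comfort that every quasi-norm estimate is performed on functions already known to belong to $X$, and the ordering precautions you flag (fix $R_0$, then $\sigma$, then $\delta$, then the rank in $k,l$) are exactly the right ones.
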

\begin{proof}
Assume that $\{f_{k}\}_{k}\subset X_{a}$ and that it converges in quasi-norm to $f\in X$.
Then it follows from Lemma \ref{star}
that $\{f_{k}\}_{k}$ converges locally in measure to $f$.

We now show that $\{f_{k}\}_{k}\subset\mbox{UAC}(X)$. Let $\varepsilon>0$
and consider any $\{E_{m}\}_{n=1}^{\infty}$ with $E_{m}\underset{m}{\longrightarrow}\emptyset$
$\mu$-a.e.. As $f_{k}\underset{k}{\longrightarrow}f$ in $X$, there
exists $N\in\N$ such that \begin{equation}
\|f_{k}-f\|_{X}<\frac{\varepsilon}{3C^{2}}\quad\mbox{ for all }\, k\geq N,\label{eq:conv}\end{equation}
 where $C$ is the constant from Definition \ref{def:wfq-n}. Since
$\{f_{k}\}_{k}\subset X_{a}$, there exists $M\in\mathbb{N}$ such
that \[
\|f_{k}\chi_{E_{m}}\|_{X}<\frac{\varepsilon}{3C^{2}}<\varepsilon\quad\mbox{for all }\, m\geq M\,\mbox{ and }\, k=1,\ldots,N.\]
 Together with \eqref{eq:conv} and properties \textbf{(P1)} and \textbf{(P2)} this implies
that \begin{eqnarray*}
\|f_{k}\chi_{E_{m}}\|_{X} & \leq & C\|(f_{k}-f)\chi_{E_{m}}\|_{X}+C\|f\chi_{E_{m}}\|_{X}\\
 & \leq & C\|f_{k}-f\|_{X}+C^{2}\|(f-f_{N})\chi_{E_{m}}\|_{X}+C^{2}\|f_{N}\chi_{E_{m}}\|_{X}\\
 & < & \frac{\varepsilon}{3C}+\frac{\varepsilon}{3}+\frac{\varepsilon}{3}\:\leq\:\varepsilon\quad\mbox{for all }m\geq M\,\mbox{ and }\, k>N.\end{eqnarray*}
Thus, $\{f_{k}\}_{k}\subset\mbox{UAC}(X)$.

Conversely, assume now that $\{f_{k}\}_{k}$ converges locally in
measure to some function $f\in\mathcal{M}_{0}(R,\mu)$ and that $\{f_{k}\}_{k}\subset\mbox{UAC}(X)$.
We start by observing that the first of these two hypotheses guarantees
that $f_{\sigma(k)}\underset{k}{\longrightarrow}f\:\mu$-a.e. for some
subsequence $\{f_{\sigma(k)}\}_{k}$ of $\{f_{k}\}_{k}$. Let $\varepsilon>0$. Since our measure space $(R,\mu)$ is totally $\sigma$-finite,
there is a non-decreasing sequence of sets $F_{n}$ such that $\cup_{n=1}^{\infty}F_{n}=R$
and $\mu(F_{n})<\infty$ for all $n \in N$. Without loss of generality, we can assume
that $0<\mu(F_{n})$, $n \in N$. Then $(R\setminus F_{n})\underset{n}{\longrightarrow}\emptyset$
$\mu$-a.e. and the
hypothesis $\{f_{k}\}_{k}\subset\mbox{UAC}(X)$ implies that there
exists $N\in\mathbb{N}$ such that
$\|f_{k}\chi_{R\setminus F_{n}}\|_{X}<\frac{\varepsilon}{6C^{2}}$ for all $k$ and all $n\geq N$.
Hence, on putting $P:=F_{N}$, we have $\mu(P)<\infty$ and 
\begin{equation}
\|f_{k}\chi_{R\setminus P}\|_{X}<\frac{\varepsilon}{6C^{2}}\quad\mbox{for all \,}k\in\mathbb{N}.\label{eq:RminusPk}
\end{equation}
Then Fatou's lemma applied to $f_{\sigma(k)}\chi_{R\setminus P}$ implies that also \begin{equation}
f\chi_{R\setminus P}\in X\quad\mbox{and}\quad\|f\chi_{R\setminus P}\|_{X}\leq\frac{\varepsilon}{6C^{2}}.\label{eq:RminusP}\end{equation}

On the other hand, together with property \textbf{(P4)} applied to $P$, the convergence of $\{f_{k}\}_{k}$ locally in
measure to $f$ guarantees that \begin{equation}
\mu(E_{j})\underset{j}{\longrightarrow}0,\quad\mbox{where}\; E_{j}:=\{x\in P:|f_{j}(x)-f(x)|>\frac{\varepsilon}{3C^{2}\rho(\chi_{P})}\}.\label{eq:defEj}\end{equation}
Now the hypothesis that $\{f_{k}\}_{k}\subset\mbox{UAC}(X)$, Proposition
\ref{pro:UACimplies} and \eqref{eq:defEj} imply that there exists
$J\in\mathbb{N}$ such that \begin{equation}
\|f_{k}\chi_{E_{j}}\|_{X}<\frac{\varepsilon}{6C^{3}}\quad\mbox{for all }\, k\in\mathbb{N}\,\mbox{ and all }\, j\geq J.\label{eq:Ejk}\end{equation}
Thus, another application of Fatou's lemma, to $f_{\sigma(k)}\chi_{E_{j}}$,
allows us to conclude that also\begin{equation}
f\chi_{E_{j}}\in X\quad\mbox{and}\quad\|f\chi_{E_{j}}\|_{X}\leq\frac{\varepsilon}{6C^{3}}\quad\mbox{for all }\, j\geq J.\label{eq:Ej}\end{equation}
 Estimates \eqref{eq:RminusPk}-\eqref{eq:Ej} and properties \textbf{(P1)}
and \textbf{(P2) }imply that\begin{eqnarray*}
\rho(|f_{k}-f|) & \leq & C\,\|(f_{k}-f)\chi_{R\setminus P}\|_{X}+C^{2}\|(f_{k}-f)\chi_{E_{k}}\|_{X}+C^{2}\rho(|f_{k}-f|\chi_{P\setminus E_{k}})\\
 & < & C^{2}\,(\frac{\varepsilon}{6C^{2}}+\frac{\varepsilon}{6C^{2}})+C^{3}\,(\frac{\varepsilon}{6C^{3}}+\frac{\varepsilon}{6C^{3}})+C^{2}\frac{\varepsilon}{3C^{2}\rho(\chi_{P})}\rho(\chi_{P})\\
 & = & \frac{\varepsilon}{3}+\frac{\varepsilon}{3}+\frac{\varepsilon}{3}\:=\:\varepsilon\quad\mbox{for all }\, k\geq J.\end{eqnarray*}
 Therefore, $f\in X$ and $\{f_{k}\}_{k}$ converges to $f$ in $X$.\end{proof}
\begin{rem}
\label{rem:not nec}(i) The hypothesis $\{f_{k}\}_{k}\subset X_{a}$
was not necessary in the proof of the assertion that convergence in quasi-norm implies
convergence in measure.

(ii) This lemma with $\mu(R)<\infty$ for the Orlicz norm was proved in \cite[Lem.~11.2]{KR61}.

\medskip{}

As already mentioned in the Introduction, in the context of Banach function spaces the following theorem is
contained in \cite[Chap. 1, Exercise 8]{BS88}.\end{rem}
\begin{thm}
\label{thm:theorem}Let $X$ be a q-BFS and $K\subset X_{a}$. Then
$K$ is precompact in $X$ if and only if it is locally precompact
in measure and $K\subset\mbox{UAC}\,(X)$.\end{thm}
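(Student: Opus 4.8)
The plan is to reduce everything to the sequential picture. By Lemma~\ref{lem:completude} the q-BFS $X$ is complete, and via the $\lambda$-norm $|||\cdot|||$ of \eqref{eq:equiv} it carries the complete metric $d(f,g)=|||f-g|||^{\lambda}$, whose convergence agrees with convergence in the quasi-norm. Hence precompactness of $K$ is equivalent to the statement that every sequence in $K$ has a subsequence converging in $X$ (to a point of $\overline{K}$). The other tool is the continuous embedding $X\hookrightarrow\mathcal{M}_0$ from Lemma~\ref{star}, which lets me move between convergence in $X$ and local convergence in measure. The real content of both directions is already packaged in Lemma~\ref{lem:lemma}, which characterizes convergent sequences inside $X_a$; the theorem is essentially its ``uniform'' and ``sequential'' upgrade.

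For the necessity direction, assume $K$ is precompact in $X$. Local precompactness in measure is then immediate: since $X\hookrightarrow\mathcal{M}_0$ is continuous and $K\subset\overline{K}^{X}$ with $\overline{K}^{X}$ compact, the image of $K$ in $\mathcal{M}_0$ lies in a compact set and is therefore precompact there. To obtain $K\subset\mbox{UAC}(X)$ I would use a finite-net argument. Fix a sequence $E_n\to\emptyset$ $\mu$-a.e.\ and $\varepsilon>0$; by total boundedness choose finitely many $g_1,\dots,g_m\in K\subset X_a$ whose $X$-balls of a suitable radius $\eta=\eta(\varepsilon,C)$ cover $K$, where $C$ is the constant from Definition~\ref{def:wfq-n}. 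Each $g_i$ has absolutely continuous quasi-norm, so there is a common $N$ with $\|g_i\chi_{E_n}\|_{X}$ small for all $i$ and all $n\ge N$. For arbitrary $f\in K$ I pick $g_i$ close to $f$ and combine \textbf{(P1)}, \textbf{(P2)} with $|(f-g_i)\chi_{E_n}|\le|f-g_i|$ to estimate $\|f\chi_{E_n}\|_{X}\le C(\|f-g_i\|_{X}+\|g_i\chi_{E_n}\|_{X})<\varepsilon$ for $n\ge N$; choosing $\eta$ so the right-hand side is below $\varepsilon$ yields exactly $K\subset\mbox{UAC}(X)$.

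For the sufficiency direction, assume $K$ is locally precompact in measure and $K\subset\mbox{UAC}(X)$. Take any sequence $\{f_k\}_k\subset K$. Since the topology of local convergence in measure on the totally $\sigma$-finite space $(R,\mu)$ is metrizable, local precompactness in measure yields a subsequence $\{f_{k_j}\}_j$ converging locally in measure to some $f\in\mathcal{M}_0$. This subsequence still lies in $X_a$, and any subset of a $\mbox{UAC}(X)$ set is again in $\mbox{UAC}(X)$, so $\{f_{k_j}\}_j\subset\mbox{UAC}(X)$. Thus Lemma~\ref{lem:lemma} applies and gives convergence of $\{f_{k_j}\}_j$ in $X$. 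Consequently every sequence in $K$ admits an $X$-convergent subsequence, and by completeness of $X$ this is precisely precompactness of $K$.

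I expect the principal obstacle to be organizational rather than computational: one must make legitimate the equivalence ``precompact $\Leftrightarrow$ every sequence has a convergent subsequence'' (which rests on the complete metric structure supplied by Lemma~\ref{lem:completude} and the $\lambda$-norm equivalence \eqref{eq:equiv}), and one must read ``locally precompact in measure'' correctly as precompactness in the metrizable space $\mathcal{M}_0$ so that the extraction of a locally-in-measure convergent subsequence is justified. Once these framework points are secured, both implications collapse onto Lemma~\ref{lem:lemma} and the finite-net estimate, and no genuinely new inequalities are needed.
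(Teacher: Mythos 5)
Your proof is correct, and most of its skeleton coincides with the paper's: both reduce precompactness to sequential precompactness via metrizability, both obtain sufficiency by feeding a locally-in-measure convergent subsequence into Lemma~\ref{lem:lemma}, and both get local precompactness in measure from the continuous embedding $X\hookrightarrow\mathcal{M}_0$ of Lemma~\ref{star} (the paper routes this through Lemma~\ref{lem:lemma} and Remark~\ref{rem:not nec}, you argue directly that the continuous image of the compact closure is compact in the metrizable space $\mathcal{M}_0$ --- same substance). The one step where you take a genuinely different route is the necessity of $K\subset\mbox{UAC}(X)$. The paper argues by contradiction: it takes $E_n\downarrow\emptyset$, $\varepsilon>0$ and $f_k\in K$ with $\|f_k\chi_{E_k}\|_X\geq\varepsilon$, extracts a subsequence converging in $X$ to some $f$, invokes Proposition~\ref{pro:closed} (closedness of $X_a$ in $X$) to conclude that $f$ itself has absolutely continuous quasi-norm, and then the quasi-triangle inequality forces $\|f_{\sigma(k)}\chi_{E_{\sigma(k)}}\|_X\to 0$, a contradiction. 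You instead run the classical finite $\varepsilon$-net argument directly: cover $K$ by balls of radius $\eta(\varepsilon,C)$ centred at finitely many $g_1,\dots,g_m\in K\subset X_a$, use the absolute continuity of each $g_i$ to get a common index $N$, and transfer the estimate to arbitrary $f\in K$ via \textbf{(P1)}, \textbf{(P2)}. Both are sound. Your version avoids Proposition~\ref{pro:closed} entirely and produces the uniform $N$ constructively --- it is essentially the Bennett--Sharpley argument for the Banach case, correctly adapted by letting the net radius absorb the quasi-norm constant $C$ --- whereas the paper's contradiction argument is shorter on the page given that Proposition~\ref{pro:closed} is already established, and it sidesteps the minor bookkeeping of justifying that net centres can be chosen inside $K$. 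That bookkeeping, and your identification of precompactness with relative compactness, both rest on the complete metric structure coming from Lemma~\ref{lem:completude} and the $\lambda$-norm equivalence \eqref{eq:equiv}, which you correctly flag as the framework points needing care.
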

\begin{proof}
Since the topologies involved are metrizable, we can prove precompactness
using the notion of sequential precompactness.

The sufficiency follows immediately from Lemma~\ref{lem:lemma}.

As for the necessity, observe that if $K$ is precompact in the space $X$, then, by Lemma~\ref{lem:lemma},
it is locally precompact in measure. Thus, it only remains to show that $K\subset\mbox{UAC}(X)$.
Assume that it is not the case. Then there exists a sequence of sets $E_{n}\downarrow\emptyset$
$\mu$-a.e. and $\varepsilon>0$ such that for each $k\in\N$ there
exists a function $f_{k}\in K$ satisfying \begin{equation}
\|f_{k}\chi_{E_{k}}\|_{X}\geq\varepsilon.\label{eq:cont}\end{equation}
On the other hand, by the precompactness of $K$ in $X$, there is
a subsequence $\{f_{\sigma(k)}\}_{k}$ which converges in $X$, say,
to $f$. Since $K \subset X_{a}$ and $X_{a}$ is a closed subspace
of $X$ (cf. Proposition~\ref{pro:closed}), the function $f$ also has absolutely
continuous quasi-norm and therefore $\|f\chi_{E_{k}}\|_{X}\underset{k}{\longrightarrow}0$.
But then \[
\|f_{\sigma(k)}\chi_{E_{\sigma(k)}}\|_{X}\leq C\,(\|f_{\sigma(k)}\chi_{E_{\sigma(k)}}-f\chi_{E_{\sigma(k)}}\|_{X}+\|f\chi_{E_{\sigma(k)}}\|_{X})\underset{k}{\longrightarrow}0,\]
which contradicts \eqref{eq:cont}.\end{proof}
\begin{rem}
\label{rem:remark} Taking into account Remark \ref{rem:not nec},
for future reference we would like to note here that the hypothesis
$K\subset X_{a}$ was not needed in the proof of the assertion that precompactness
in quasi-norm implies local precompactness in measure. 

\medskip{}

The following is an immediate consequence of Theorem \ref{thm:theorem}.
Note that the terminology {}``operator locally compact in measure''
means an operator for which the image of the closed unit ball is precompact in the topology of local convergence in measure.\end{rem}
\begin{cor}
\label{cor:corollary}Let $X$ be a quasi-normed space, $Y$ be q-BFS
and $T$ be a linear operator acting from $X$ into $Y_{a}$. The
operator $T$ is compact if and only if it is locally compact in measure
and $\{Tf:\,\|f\|_{X}\leq1\}\subset\mbox{UAC}(Y)$.
\end{cor}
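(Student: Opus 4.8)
The plan is to deduce the corollary directly from Theorem \ref{thm:theorem} by identifying the relevant image set with the set $K$ appearing there. First I would introduce the closed unit ball $B_{X}:=\{f\in X:\|f\|_{X}\leq1\}$ of the quasi-normed space $X$ and set $K:=T(B_{X})=\{Tf:\|f\|_{X}\leq1\}$, which is precisely the set whose topological behaviour governs both notions of compactness occurring in the statement.

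The key observation is that the hypothesis that $T$ acts from $X$ into $Y_{a}$ guarantees $K\subset Y_{a}$, so that $K$ is an admissible set for Theorem \ref{thm:theorem}. With this in hand, I would unwind the two definitions involved. By the standard fact that compactness of a linear operator is equivalent to precompactness of the image of the unit ball, $T$ is compact precisely when $K$ is precompact in $Y$. By the meaning of ``locally compact in measure'' recalled just before the statement (the image of the closed unit ball is precompact in the topology of local convergence in measure), $T$ is locally compact in measure precisely when $K$ is locally precompact in measure.

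Applying Theorem \ref{thm:theorem} to the set $K\subset Y_{a}$ then yields that $K$ is precompact in $Y$ if and only if $K$ is locally precompact in measure and $K\subset\mbox{UAC}(Y)$. Rewriting each of these three conditions in terms of $T$, via the translations of the previous paragraph, gives exactly the asserted equivalence: $T$ is compact iff it is locally compact in measure and $\{Tf:\|f\|_{X}\leq1\}\subset\mbox{UAC}(Y)$.

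Since the argument is a direct application of the already-established Theorem \ref{thm:theorem}, there is essentially no genuine obstacle. The only points requiring (minor) care are to verify that $K\subset Y_{a}$ so that the theorem is applicable — which is immediate from the assumption that the range of $T$ lies in $Y_{a}$ — and to record the elementary equivalence between compactness of the linear operator $T$ and precompactness of $T(B_{X})$, which makes the dictionary between the operator-theoretic and set-theoretic formulations precise.
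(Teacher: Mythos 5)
Your proof is correct and takes exactly the route the paper intends: the paper offers no separate argument, presenting the corollary as an immediate consequence of Theorem \ref{thm:theorem} applied to $K=T(B_X)\subset Y_a$, with ``locally compact in measure'' unwound precisely as you do. Your additional care in recording the equivalence between compactness of $T$ and precompactness of $T(B_X)$ is the only detail the paper leaves implicit.
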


\section{A compactness criterion in quasi-Banach function spaces over $\Rn$}

\label{sec:4}
\begin{defn}
A function quasi-norm $\rho$ over a totally $\sigma$-finite measure
space $(R,\mu)$ is said to be rearrangement-invariant if $\rho(f)=\rho(g)$
for every pair of equimeasurable functions $f$ and $g$ in $\mathcal{M}_{0}^{+}(R,\mu)$.
The corresponding q-BFS space $X=X(\rho)$ is then said to be \emph{rearrangement-invariant
}(\emph{r.i.}, for short).\end{defn}

In what follows we shall often consider a q-BFS over $\Rn$ with the Lebesgue measure and we shall denote it simply by $L(\Rn)$.

\begin{lem}
\label{lem:minkowski} {\rm (Minkowski-type inequality)} Let $L=L(\Rn)$ be a BFS. If $f\in\mathcal{M}^{+}(\Rn\times\Rn)$ is such
that $\int_{\Rn}\|f(\cdot,y)\|_{L}\, dy<\infty$, then
\[
\left\Vert \int_{\Rn}f(\cdot,y)\, dy\right\Vert _{L}\leq\int_{\Rn}\|f(\cdot,y)\|_{L}\, dy.
\]
\end{lem}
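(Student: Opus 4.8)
The plan is to use the duality between a Banach function space and its associate space, which is precisely where the hypothesis that $L$ is a \emph{Banach} (rather than merely quasi-Banach) function space enters. Since $L=L(\rho)$ is a BFS, $\rho$ is a genuine function norm (triangle-inequality constant $C=1$, together with property \textbf{(P5)}), so the Lorentz--Luxemburg theorem applies: the associate space $L'=L(\rho')$, defined through
\[
\rho'(g):=\sup\Big\{\int_{\Rn}\phi\, g\,dx:\ \phi\in\mathcal{M}^{+}(\Rn),\ \rho(\phi)\leq 1\Big\},
\]
is again a BFS and satisfies $\rho''=\rho$ (cf. \cite[Chap.~1]{BS88}). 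This yields the norming representation
\[
\|h\|_{L}=\rho(h)=\sup\Big\{\int_{\Rn}h\, g\,dx:\ g\in\mathcal{M}^{+}(\Rn),\ \rho'(g)\leq 1\Big\}\qquad(h\in\mathcal{M}^{+}(\Rn)).
\]

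First I would set $h(x):=\int_{\Rn}f(x,y)\,dy$, which by Tonelli's theorem is a well-defined function in $\mathcal{M}^{+}(\Rn)$. Applying the representation above and then interchanging the order of integration (legitimate by Tonelli, as $f\geq 0$ and $g\geq 0$), I get, for each admissible $g$,
\[
\int_{\Rn}h(x)\,g(x)\,dx=\int_{\Rn}\Big(\int_{\Rn}f(x,y)\,g(x)\,dx\Big)\,dy.
\]
For each fixed $y$, the defining property of the associate norm (the H\"older-type inequality $\int_{\Rn}\phi\, g\,dx\leq\rho(\phi)\,\rho'(g)$) gives $\int_{\Rn}f(x,y)\,g(x)\,dx\leq\rho(f(\cdot,y))\,\rho'(g)\leq\|f(\cdot,y)\|_{L}$. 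Integrating this bound over $y$ and then taking the supremum over all $g$ with $\rho'(g)\leq 1$ produces
\[
\|h\|_{L}\leq\int_{\Rn}\|f(\cdot,y)\|_{L}\,dy,
\]
which is exactly the asserted inequality.

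The main obstacle is structural rather than computational: the whole argument hinges on the norming duality $\rho=\rho''$, which holds for a BFS but may fail for a general q-BFS, and this is precisely why the statement must be restricted to Banach function spaces. The remaining ingredients are routine. Measurability of $x\mapsto h(x)$ and the interchange of the iterated integrals are both supplied by Tonelli's theorem, all integrands being nonnegative; measurability of $y\mapsto\|f(\cdot,y)\|_{L}$, needed for the right-hand side to be meaningful, is implicitly part of the hypothesis $\int_{\Rn}\|f(\cdot,y)\|_{L}\,dy<\infty$, whose finiteness also guarantees $h\in L$. (An alternative, duality-free route would discretize the $y$-integral and combine the iterated triangle inequality with the Fatou property \textbf{(P3)}; however, that reduction only replaces $\Rn$ by a cube and does not by itself close the estimate, so the associate-space argument is the cleaner path.)
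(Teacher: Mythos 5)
Your proof is correct and follows essentially the same route as the paper: both arguments represent the norm via the associate space $L'$ (the Lorentz--Luxemburg duality $\rho''=\rho$), interchange the iterated integrals by Tonelli/Fubini, and close the estimate with the H\"older inequality $\int_{\Rn}\phi\,g\,dx\leq\rho(\phi)\,\rho'(g)$. Your write-up merely makes explicit the duality ingredients that the paper's proof uses implicitly.
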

\begin{proof}
Using the properties of the norm in $L$
and Fubini Theorem, we obtain\begin{eqnarray*}
\left\Vert \int_{\Rn}f(\cdot,y)\, dy\right\Vert _{L} & = & \sup_{\|g\|_{L'}\leq1}\Big|\int_{\Rn}\Big(\int_{\Rn}f(x,y)\, dy\Big)g(x)\, dx\Big|\\
 & \leq & \sup_{\|g\|_{L'}\leq1}\int_{\Rn}\Big(\int_{\Rn}|f(x,y)g(x)|\, dx\Big)\, dy\\
 & \leq & \sup_{\|g\|_{L'}\leq1}\int_{\Rn}\|f(\cdot,y)\|_{L}\|g\|_{L'}\, dy\\
 & \leq & \int_{\Rn}\|f(\cdot,y)\|_{L}\, dy\end{eqnarray*}
 \noindent (where $L'$ stands for the associate space of $L$).
\end{proof}
\begin{thm}
\label{thm:compactBFS}Let $L=L(\Rn)$ be a BFS.

\noindent \emph{(a)} If $L$ is r.i. and $K\subset L_{a}$ is precompact in $L$, then:

\emph{(i)} $K$ is bounded in $L$;

\emph{(ii)} $\forall\varepsilon>0,\exists\mbox{compact }G\subset\Rn:\forall u\in K,\,\|u\chi_{\Rn\setminus G}\|_{L}<\varepsilon$;

\emph{(iii)} $\forall\varepsilon>0,\exists\delta>0:\forall u\in K,\,|h|<\delta\Rightarrow\|\Delta_{h}u\|_{L}<\varepsilon$. \footnote{Recall
that $\Delta_{h}u(x):= u(x+h)-u(x), \ x \in \Rn$.}

\noindent \emph{(b)} Conversely, if $K\subset L$ satisfies conditions \emph{(i)-(iii)}, then $K$ is precompact in~$L$.\end{thm}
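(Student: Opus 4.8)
The plan is to treat the two implications separately, using the abstract criterion (Theorem~\ref{thm:theorem}) and the absolute-continuity machinery for part~(a), and a mollification argument combined with the Minkowski-type inequality (Lemma~\ref{lem:minkowski}) and the Arzel\`a--Ascoli theorem for part~(b).

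For part~(a), conditions (i) and (ii) are quick. Boundedness (i) is immediate, since a precompact subset of a metric space is totally bounded, hence bounded. For the tightness (ii) I would invoke Theorem~\ref{thm:theorem}: precompactness of $K\subset L_{a}$ forces $K\subset\mathrm{UAC}(L)$, and applying Definition~\ref{def:unifabscont} to the sequence $E_{m}:=\Rn\setminus B(0,m)\downarrow\emptyset$ yields a radius $N$ with $\|u\chi_{\Rn\setminus B(0,N)}\|_{L}<\varepsilon$ for all $u\in K$; then $G=B(0,N)$ works. The crux is (iii), the equicontinuity of the translations $\Delta_{h}u=u(\cdot+h)-u$. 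The first ingredient is that, $L$ being r.i. and Lebesgue measure being translation-invariant, $u(\cdot+h)$ is equimeasurable with $u$, so translation is an \emph{isometry}: $\|u(\cdot+h)\|_{L}=\|u\|_{L}$. I would then establish translation continuity of a \emph{single} $u\in L_{a}$: assuming $u\neq0$ (otherwise trivial), the existence of a nonzero function in $L_{a}$ forces the fundamental function $\varphi_{L}(t):=\|\chi_{E}\|_{L}$ (with $|E|_{n}=t$) to satisfy $\varphi_{L}(0^{+})=0$, so that every bounded function with support of finite measure lies in $L_{a}$. Truncating $u$ in height and in support gives $u_{N}\in L_{a}$ with $\|u-u_{N}\|_{L}\to0$ (absolute continuity of $u$), and for each such bounded, compactly supported $u_{N}$ one has $\Delta_{h}u_{N}\to0$ in measure as $h\to0$, dominated by a fixed multiple of $\chi_{B(0,R_{N}+1)}\in L_{a}$. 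A dominated-convergence argument in $L$ (via Egorov's theorem and Remark~\ref{rem:converse}(ii)) then gives $\|\Delta_{h}u_{N}\|_{L}\to0$, and the isometry yields $\|\Delta_{h}u\|_{L}\le 2\|u-u_{N}\|_{L}+\|\Delta_{h}u_{N}\|_{L}\to0$.

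Finally I would upgrade (iii) to uniformity over $K$: given a finite $\eta$-net $u_{1},\dots,u_{m}\in K$ of the totally bounded set $K$, for $u\in K$ close to some $u_{j}$ the isometry gives $\|\Delta_{h}(u-u_{j})\|_{L}\le 2\|u-u_{j}\|_{L}$, whence $\|\Delta_{h}u\|_{L}\le 2\eta+\max_{j}\|\Delta_{h}u_{j}\|_{L}<3\eta$ once $h$ is small enough to control the finitely many $u_{j}$. I expect the main obstacle in (a) to be the careful justification of the dominated-convergence step in the abstract space $L$, and the verification that $L_{a}\neq\{0\}$ yields $\varphi_{L}(0^{+})=0$.

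For part~(b) (which does not require r.i.) I would show that $K$ is totally bounded; completeness of $L$ (Lemma~\ref{lem:completude}) then gives precompactness. Fix $\varepsilon>0$ and take a standard nonnegative mollifier $\rho_{\delta}$ supported in $B(0,\delta)$. Writing $u_{\delta}-u=\int_{\Rn}\rho_{\delta}(y)\,\Delta_{-y}u\,dy$ and applying Lemma~\ref{lem:minkowski}, one gets $\|u_{\delta}-u\|_{L}\le\int_{\Rn}\rho_{\delta}(y)\,\|\Delta_{-y}u\|_{L}\,dy$, which by (iii) is $<\varepsilon/4$ for all $u\in K$ once $\delta$ is small. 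Choosing the compact $G$ from (ii) and a closed ball $B\supset G$, the tail satisfies $\|u_{\delta}\chi_{\Rn\setminus B}\|_{L}\le\|u_{\delta}-u\|_{L}+\|u\chi_{\Rn\setminus G}\|_{L}<\varepsilon/2$. The family $\{u_{\delta}|_{B}:u\in K\}$ is uniformly bounded and equi-Lipschitz on $B$: here I use property \textbf{(P5)} together with (i) to bound $\int_{B(x,\delta)}|u|\le c\,\|u\|_{L}\le cM$ uniformly in $u$ and $x\in B$, so that $|u_{\delta}|\le\|\rho_{\delta}\|_{\infty}cM$ and $|\nabla u_{\delta}|\le\|\nabla\rho_{\delta}\|_{\infty}cM$ on $B$. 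By the Arzel\`a--Ascoli theorem this family is precompact in $C(B)$, hence (since $|B|_{n}<\infty$ and $\|f\chi_{B}\|_{L}\le\|f\|_{C(B)}\|\chi_{B}\|_{L}$ by \textbf{(P2)}) precompact in $L$. A finite $\varepsilon/4$-net of $\{u_{\delta}\chi_{B}:u\in K\}$, combined with the two displayed estimates, then produces a finite $\varepsilon$-net of $K$, so $K$ is totally bounded. The main obstacle in this direction is securing the uniform $C^{1}$-bounds on the mollifications, which is precisely where the integrability property \textbf{(P5)} of a BFS (as opposed to a mere q-BFS) is needed.
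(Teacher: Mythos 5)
Your proposal is correct, and part (b) follows essentially the same path as the paper's proof: mollify, control $\|J_{\delta}\ast u-u\|_{L}$ via Lemma~\ref{lem:minkowski} and condition (iii), get uniform bounds and equicontinuity on a compact set from \textbf{(P5)} and condition (i), apply Arzel\`a--Ascoli, and patch the tail with condition (ii). (The paper gets equicontinuity of the mollified family directly from condition (iii) rather than from gradient bounds on the mollifier, but that difference is cosmetic.) Part (a) is where you genuinely diverge. The paper disposes of (ii) and (iii) in a few lines by importing the density result $\overline{C_{0}^{\infty}(\Rn)}\supset L_{a}$ (cited from the EGO97 reference): a finite net of $K$ is approximated by finitely many continuous compactly supported functions $\phi$, and then (ii) follows because $\|u\chi_{\Rn\setminus G}\|_{L}\leq\|u-\phi\|_{L}$ with $G$ the union of the supports, while (iii) follows from the $L$-mean continuity of each $\phi$ together with translation invariance of the r.i.\ norm. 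You instead obtain (ii) from Theorem~\ref{thm:theorem} (precompact plus $K\subset L_{a}$ implies $K\subset\mbox{UAC}(L)$) applied to $E_{m}:=\Rn\setminus B(0,m)$ --- which, notably, does not even use the r.i.\ hypothesis --- and you prove (iii) by hand: a nonzero $u\in L_{a}$ forces $\varphi_{L}(0^{+})=0$, hence bounded functions with finite-measure support lie in $L_{a}$; truncation plus absolute continuity, convergence in measure of $\Delta_{h}u_{N}$ dominated by a multiple of $\chi_{B(0,R_{N}+1)}\in L_{a}$, and Remark~\ref{rem:converse}(ii) give translation continuity of each $u\in L_{a}$; the translation isometry and a finite net then upgrade this to uniformity over $K$. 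In effect you re-prove, inside the paper's own toolkit, the piece of the density theorem that the paper uses as a black box; this makes your argument longer but self-contained, and it isolates exactly where rearrangement invariance enters (the isometry $\|u(\cdot+h)\|_{L}=\|u\|_{L}$ and the fundamental-function argument). One inessential remark: in your dominated-convergence step Egorov's theorem is not needed --- convergence in measure of $\Delta_{h}u_{N}$ together with Remark~\ref{rem:converse}(ii) applied to $\chi_{B(0,R_{N}+1)}$ already suffices.
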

\begin{proof}
With appropriate modifications, we follow the arguments which prove the well-known
characterization of the compactness in Lebesgue spaces (cf., e.g., \cite[Thm. 2.32]{AdaFou05}).

Suppose first that $K\subset L_{a}$ is precompact. Then, given $\varepsilon>0$, there exists a~finite set $N_{\varepsilon}\subset K$
such that \[
K\subset\bigcup_{f\in N_{\varepsilon}}B_{\varepsilon/6}(f),\]
where $B_{r}(f)$ stands for the open ball in $L$ of radius $r>0$
and centred at $f$. Since $\overline{C_{0}^{\infty}(\Rn)}\supset L_{a}$
(cf. \cite[Rem. 3.13]{EGO97}) and, by hypothesis, $L_{a}\supset K$,
there exists a finite set $S$ of continuous functions with compact
support in $\Rn$ such that for each $u\in K$ there exists $\phi\in S$
satisfying $\|u-\phi\|_{L}<\varepsilon/3$. Let $G$ be the union
of the supports of the finitely many functions from $S$. Then $G\subset\subset\Rn$
and\[
\|u\chi_{\Rn\setminus G}\|_{L}=\|u\chi_{\Rn\setminus G}-\phi\chi_{\Rn\setminus G}\|_{L}\leq\|u-\phi\|_{L}<\varepsilon/3\quad\mbox{for all }\, u\in K.\]

Since continuous functions with compact supports in $\Rn$ are $L$-mean continuous \footnote{Note that a function $g \in L(\Rn)$ is said to be {\it L-mean continuous} 
if for every $\varepsilon >0$ there is a $\delta >0$ such that $\|g(\cdot+h)-g(\cdot)\|_L <\varepsilon$ provided that $h\in \Rn, \ |h|<\delta$.}
 and the set $S$ is finite, there exists $\delta>0$ such that
\[
\|\phi(\cdot+h)-\phi(\cdot)\|_{L}<\varepsilon/3 \quad \mbox{when }\, |h|<\delta \, \mbox{ and }\,\phi\in S.
\]
Then, for any $u\in K$ and $\phi\in S$ such that $\|u-\phi\|_{L}<\varepsilon/3$,\begin{eqnarray*}
\lefteqn{\|u(\cdot+h)-u(\cdot)\|_{L}}\\
 & \leq & \|u(\cdot+h)-\phi(\cdot+h)\|_{L}+\|\phi(\cdot+h)-\phi(\cdot)\|_{L}+\|\phi(\cdot)-u(\cdot)\|_{L}\\
 & < & \varepsilon/3+\varepsilon/3+\varepsilon/3\ =\ \varepsilon\quad\mbox{if }\,|h|<\delta\end{eqnarray*}
(we have also used the fact that $L$ is r.i.).

We have shown that conditions (ii) and (iii) hold. Of course, being precompact, $K$ is also bounded
(in $L$). Therefore (i) is also verified.

We now prove the converse result, i.e., that conditions
(i)-(iii) are sufficient for the precompactness of $K$ in $L$.

Let $\varepsilon>0$ be given. By condition (ii), there is a compact set $G$ in $\Rn$ such that\begin{equation}
\|u\chi_{\Rn\setminus G}\|_{L}<\varepsilon/3\quad\mbox{for all }\, u\in K.\label{eq:tail}\end{equation}

Let $J$ be a non-negative function in $C_{0}^{\infty}(\Rn)$
satisfying $J(x)=0$ if $|x|\geq1$ and $\int_{\Rn}J(x)\, dx=1$. Put
$J_{\eta}(x)=\eta^{-n}J(\eta^{-1}x)$, $x\in\Rn$, $\eta>0$. Then,
for any $u\in L$  (recall that, by \textbf{(P5)}, $u$ is locally integrable), the function $J_{\eta}\ast u$ defined by\[
(J_{\eta}\ast u)(x)=\int_{\Rn}J_{\eta}(x-y)u(y)\, dy=\int_{\Rn}J(y)u(x-\eta y)\, dy\]
belongs
to $C^{\infty}(\Rn)$. In particular, its restriction to $G$ belongs
to $C(G)$.

By Lemma \ref{lem:minkowski} and properties of $J$,
\begin{eqnarray*}
\|J_{\eta}\ast u-u\|_{L} & = & \left\Vert \int_{\Rn}J(y)u(\cdot-\eta y)\, dy-\int_{\Rn}J(y)u(\cdot)\, dy\right\Vert _{L}\\
 & = & \left\Vert \int_{\Rn}\chi_{B(0,1)}(y)J(y)(u(\cdot-\eta y)-u(\cdot)) \, dy\right\Vert _{L}\\
 & \leq & \int_{\Rn}\chi_{B(0,1)}(y)J(y)\|u(\cdot-\eta y)-u(\cdot)\|_{L}\, dy\\
 & \leq & \sup_{y\in B(0,1)}\|u(\cdot-\eta y)-u(\cdot)\|_{L}\\
 & = & \sup_{|h|<\eta}\|u(\cdot-h)-u(\cdot)\|_{L}.\end{eqnarray*}
Thus, according to condition (iii), there is $\delta>0$ such that
\begin{equation}
\|J_{\delta}\ast u-u\|_{L}<\varepsilon/3.\label{eq:parallel}\end{equation}

Now we are going to show that the set $\{(J_{\delta}\ast u)|_{G}:\, u\in K\}$
satisfies the conditions of the Arzelà-Ascoli Theorem in $C(G)$.

First, denoting by $G_{\delta}$ the neighbourhood of radius $\delta$
of $G$ and using conditions \textbf{(P5)} and (i), we arrive at
\begin{eqnarray*}
|(J_{\delta}\ast u)|_{G}(x)| & \leq & \int_{\Rn}J(y)|u(x-\delta y)|\, dy\\
 & \leq & \Big(\sup_{y\in\Rn}J(y)\Big)\,\delta^{-n}\int_{B(x,\delta)}|u(z)|\, dz\\
 & \lesssim & \delta^{-n}\int_{G_{\delta}}|u(y)|\, dy\\
 & \lesssim & 1 \quad\mbox{for all }\, x\in G\,\mbox{ and }\, u\in K.\end{eqnarray*}
Second, 
\begin{eqnarray*}
\lefteqn{|(J_{\delta}\ast u)|_{G}(x+h)-(J_{\delta}\ast u)|_{G}(x)|}\\
 & \leq & \int_{\Rn}J(y)|u(x+h-\delta y)-u(x-\delta y)|\, dy\\
 & \leq & \Big(\sup_{y\in\Rn}J(y)\Big)\,\delta^{-n}\int_{B(x,\delta)}|u(z+h)-u(z)|\, dz\\
 & \lesssim & \delta^{-n}\int_{G_{\delta}}|u(z+h)-u(z)|\, dz\\
 & \lesssim & \|u(\cdot+h)-u(\cdot)\|_{L}\end{eqnarray*}
for all $x,x+h\in G$ and all $u\in L$. Thus, by condition (iii),
given any $\varepsilon_{1}>0$ there exists $\delta_{1}>0$ such that
\[
|(J_{\delta}\ast u)|_{G}(x+h)-(J_{\delta}\ast u)|_{G}(x)|<\varepsilon_{1}
\]
 for all $u\in K$ and $x,x+h\in G$ with $|h|<\delta_{1}$.

By the Arzelà-Ascoli Theorem, the set $\{(J_{\delta}\ast u)|_{G}:\, u\in K\}$
is precompact in $C(G)$. Therefore, there exists a finite set $\{\psi_{1},\ldots,\psi_{m}\}$
of functions in $C(G)$ with the following property: given any $u\in K$,
there exists $j\in\{1,\ldots,m\}$ such that\begin{equation}
|(J_{\delta}\ast u)(x)-\psi_{j}|<\frac{\varepsilon}{3\|\chi_{G}\|_{L}}\quad\mbox{for all }\, x\in G.\label{eq:arzela}\end{equation}
Thus, denoting by $\tilde{\psi}_{j}$ the extension of $\psi_{j}$
by zero outside $G$, we obtain from \eqref{eq:tail}, \eqref{eq:parallel} and \eqref{eq:arzela} that\begin{eqnarray*}
\|u-\tilde{\psi}_{j}\|_{L} & \leq & \|(u-\tilde{\psi}_{j})\chi_{\Rn\setminus G}\|_{L}+\|(u-\tilde{\psi}_{j})\chi_{G}\|_{L}\\
 & < & \frac{\varepsilon}{3}+\|(u-J_{\delta}\ast u)\chi_{G}\|_{L}+\|(J_{\delta}\ast u-\tilde{\psi}_{j})\chi_{G}\|_{L}\\
 & < & \frac{\varepsilon}{3}+\frac{\varepsilon}{3}+\frac{\varepsilon}{3}\ =\ \varepsilon,\end{eqnarray*}
that is,\[
K\subset\bigcup_{j=1}^{m}B_{\varepsilon}(\tilde{\psi}_{j}),\]
which finishes the proof of the precompactness of $K$ in $L$.\end{proof}
\begin{rem}
\label{rem:coincide} Part (a) of Theorem \ref{thm:compactBFS} remains true if the space $L$ is replaced by any q-BFS $L(\Rn)$ coinciding with an r.i. BFS. Similarly, Part (b) of Theorem~\ref{thm:compactBFS} remains true if the space $L$ is replaced by any q-BFS $L(\Rn)$ coinciding with a~BFS.
\end{rem}
Now we would like to extend Theorem \ref{thm:compactBFS} to a q-BFS more general
than the one considered in Remark \ref{rem:coincide}. To this end, start by
observing that, given a totally $\sigma$-finite measure space $(R,\mu)$,
a function quasi-norm $\rho$ and a positive number $b$, \emph{the
function} $\sigma$ \emph{defined by} \begin{equation}
\sigma(f):=\big(\rho(f^{1/b})\big)^{b},\quad f\in\mathcal{M}^{+}(R,\mu),\label{eq:powerformula}\end{equation}
\emph{is also a function quasi-norm} and the space 
$$X(\sigma) := \{f\in\mathcal{M}(R,\mu):\,\sigma(|f|)<\infty\}$$
is, according to Definition \ref{def:wq-BFS}, the corresponding q-BFS.
Since
\begin{eqnarray*}
\{f\in\mathcal{M}(R,\mu):\,\sigma(|f|)<\infty\} & \!\!\!\! = \!\!\!\! & \{f\in\mathcal{M}(R,\mu):\,\rho(|f|^{1/b})<\infty\}\\
& \!\!\!\! = \!\!\!\! & \{f\in\mathcal{M}(R,\mu):\,|f|=g^{b}\,\mbox{ for some }\, g\in X(\rho)\},
\end{eqnarray*}
\emph{we shall denote} $X(\sigma)$ \emph{also by} $X(\rho)^{b}$, \emph{or simply by}
$X^{b}$ if it is clear that $X$ refers to $X(\rho)$.
\begin{defn}
\label{def:powerqBFS}Given $(R,\mu)$ and a function quasi-norm $\rho$, the space $X(\rho)$
is called a \emph{power q-BFS} if there exists $b\in(0,1]$  such that the space $X(\sigma)$, with $\sigma$ from \eqref{eq:powerformula}, coincides with
a BFS.
\end{defn}
Of course, any BFS is a power q-BFS (take $b=1$ in \eqref{eq:powerformula}).
However, more interesting is to note that a Lebesgue space $L_{p}(\Rn)$
with $0<p<1$ is also a~power q-BFS. Indeed, choosing $b=p$, we obtain
$\|f\|_{X(\sigma)}=\int_{\Rn}|f|$, and thus the space $X(\sigma)$
is the BFS $L_{1}(\Rn)$ (which means that $X(\rho):=L_p(\Rn)$ is a power q-BFS). Another, less trivial, example is given by a~Lorentz space $L_{p,q}(\Rn)$ with $0<p\leq1$
or $0<q\leq1$. In this case we have, for each fixed positive $b$,
\begin{eqnarray}
\|f\|_{X(\sigma)} & = & \||f|^{1/b}\|_{X(\rho)}^{b}\nonumber \\
 & = & \Big(\int_{0}^{\infty}(t^{1/p}f^{\ast}(t)^{1/b})^{q}\,\frac{dt}{t}\Big)^{b/q}\nonumber \\
 & = & \Big(\int_{0}^{\infty}(t^{b/p}f^{\ast}(t))^{q/b}\,\frac{dt}{t}\Big)^{b/q}.\label{eq:lorentznorm}
 \end{eqnarray}
Therefore, $X(\sigma)$ is $L_{p/b,q/b}(\Rn)$,
depending on the choice of $b$. Since this coincides with a
BFS if $p/b,q/b>1$ (cf. \cite[Chap. 4, Thm. 4.6]{BS88}),
the given Lorentz space $L_{p,q}(\Rn)$ is a power q-BFS
(one takes $b \in (0,\min\{p,q\})$).

The notion of a power q-BFS is closely related to the so-called lattice convexity. We recall the latter in the following definition.
\begin{defn}
(see, e.g., \cite[Def. 1.d.3]{LT78} or \cite{Kal84}) If $X=X(\rho)$ is a q-BFL and $b\in(0,\infty)$, then $X$ is said to 
be \emph{b-convex} if for some $C\geq0$ and any $g_{1},\ldots,g_{m}\in X$,\[
\left\Vert \left(\sum_{i=1}^{m}|g_{i}|^{b}\right)^{1/b}\right\Vert _{X}\leq C\left(\sum_{i=1}^{m}\|g_{i}\|_{X}^{b}\right)^{1/b}.\]
 We have the following result.\end{defn}
\begin{prop}
\label{equivbconvex}
Let $X=X(\rho)$ be a q-BFL and $b\in(0,\infty)$. Then the functional $f\mapsto\||f|^{1/b}\|_{X}^{b}$
is equivalent to a norm in $X^{b}$ if and only if $X$ is $b$-convex.
In particular, given a q-BFS $X$, then $X$ is a power q-BFS if and only if there is $b\in(0,1]$ such that $X$ is a b-convex q-BFL.
\end{prop}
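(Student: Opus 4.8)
The plan is to prove the two claims separately, treating the first (the equivalence ``$N$ is equivalent to a norm'' $\Leftrightarrow$ ``$X$ is $b$-convex'') as the core and deducing the ``in particular'' as a corollary. Throughout I write $N(f):=\||f|^{1/b}\|_X^b=\sigma(|f|)$ for the functional on $X^b$ determined by \eqref{eq:powerformula}. The elementary fact I would use is that a quasi-norm is equivalent to a norm if and only if there is a constant $C'$ with $N(\sum_{i=1}^m f_i)\le C'\sum_{i=1}^m N(f_i)$ for every finite family: if such $C'$ exists, then the sublinear functional $f\mapsto\inf\{\sum_i N(f_i):f=\sum_i f_i\}$ is a norm equivalent to $N$, while the converse is immediate from the triangle inequality.

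For the direction ``$b$-convex $\Rightarrow$ $N$ equivalent to a norm'' I would, given $f_1,\dots,f_m\in X^b$, use $|\sum_i f_i|\le\sum_i|f_i|$ together with monotonicity \textbf{(P2)} and the monotonicity of $t\mapsto t^{1/b}$ to get $N(\sum_i f_i)\le\|(\sum_i|f_i|)^{1/b}\|_X^b$. Writing $g_i:=|f_i|^{1/b}\in X$, so that $|f_i|=g_i^b$ and $N(f_i)=\|g_i\|_X^b$, the $b$-convexity inequality applied to $g_1,\dots,g_m$ yields $\|(\sum_i g_i^b)^{1/b}\|_X^b\le C^b\sum_i\|g_i\|_X^b=C^b\sum_i N(f_i)$, which is exactly the required subadditivity with $C'=C^b$. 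Conversely, for ``$N$ equivalent to a norm $\Rightarrow$ $b$-convex'', given $g_1,\dots,g_m\in X$ I may assume $g_i\ge0$ and set $f_i:=g_i^b\in X^b$; then $N(f_i)=\|g_i\|_X^b$, and the subadditivity of $N$ gives $\|(\sum_i g_i^b)^{1/b}\|_X^b=N(\sum_i f_i)\le C'\sum_i N(f_i)=C'\sum_i\|g_i\|_X^b$, so taking $b$-th roots produces $b$-convexity with constant $(C')^{1/b}$.

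To deduce the ``in particular'' I would observe that, by \eqref{eq:powerformula} and the definition of a power q-BFS, $X$ is a power q-BFS exactly when there is $b\in(0,1]$ for which $X^b=X(\sigma)$ coincides with a BFS, that is, $\sigma$ is equivalent to a Banach function norm. One implication is immediate: a Banach function norm is in particular a norm, so if $X^b$ coincides with a BFS then $N=\sigma$ is equivalent to a norm and the first part gives $b$-convexity. For the reverse implication I would start from the first part to obtain that $\sigma$ is equivalent to a norm, and then show that a function quasi-norm equivalent to a norm coincides with a Banach function norm. The natural construction is the monotone convexification $\rho_0(f):=\inf\{\sum_i\sigma(f_i):f\le\sum_i f_i,\ f_i\in\mathcal{M}^+\}$, which is homogeneous, monotone and subadditive (triangle inequality with constant $1$) and, using the subadditivity of $\sigma$ coming from the norm-equivalence, satisfies $\rho_0\approx\sigma$; passing to its second associate should then restore the Fatou property \textbf{(P3)} and property \textbf{(P5)}, producing a genuine Banach function norm in the sense of Remark \ref{rem:axioms}(ii).

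I expect the main obstacle to be this last step: verifying that the convexified (and then biconjugate) functional is not merely an equivalent norm but a full Banach function norm. The recovery of \textbf{(P5)}, equivalently the local $L^1(\mu)$-embedding of $X^b$, is the delicate point, since it is precisely where the restriction $b\le1$ and the underlying q-BFS structure of $X$ have to be exploited; the remaining axioms \textbf{(P1)}--\textbf{(P4)} transfer much more routinely.
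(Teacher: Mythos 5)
Your proof of the first assertion is correct and is essentially the paper's own argument: the paper uses the same decomposition functional $|||f|||=\inf\sum_{i}\||f_{i}|^{1/b}\|_{X}^{b}$ over finite decompositions $f=\sum_{i}f_{i}$, derives $\||f|^{1/b}\|_{X}^{b}\leq C^{b}|||f|||$ from $b$-convexity exactly as you do, and treats the converse direction (your substitution $f_{i}=g_{i}^{b}$ plus subadditivity) as immediate.

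The problem is the ``in particular'' claim, and it sits exactly where you flagged it --- but the situation is worse than ``delicate'': the step cannot be carried out at all. Up to equivalence of quasi-norms, \textbf{(P5)} is not a property of the functional but of the space: it asserts that every $f\in X(\sigma)$ is integrable over each set of finite measure, and this is invariant under replacing $\sigma$ by any equivalent functional; hence no convexification, biconjugation or second associate can create it. (By contrast, \textbf{(P3)} can indeed be recovered along the lines you sketch, by taking a lower semicontinuous envelope and using Lemma \ref{lem:fatou} for $\sigma$.) Concretely, take $R=(0,1)$ with Lebesgue measure and $\rho(f)=\int_{0}^{1}f(t)\,e^{-1/t}\,dt$. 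This $\rho$ satisfies \textbf{(P1)}--\textbf{(P4)} and is a norm, so $X(\rho)$ is a $1$-convex q-BFS; it is then $b$-convex for every $b\in(0,1]$ with constant $1$, since by reverse H\"older's inequality $\bigl(\sum_{i}|g_{i}|^{b}\bigr)^{1/b}\leq\sum_{i}y_{i}|g_{i}|$ pointwise whenever $y_{i}>0$ and $\sum_{i}y_{i}^{b/(b-1)}\leq1$, so that \textbf{(P2)} and the triangle inequality give $\|(\sum_{i}|g_{i}|^{b})^{1/b}\|_{X}\leq\sum_{i}y_{i}\|g_{i}\|_{X}$, and minimizing over such $y$ yields $b$-convexity. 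Yet for every $b\in(0,1]$ the function $f_{b}(t)=e^{b/(2t)}$ satisfies $\sigma(f_{b})=\bigl(\int_{0}^{1}e^{-1/(2t)}\,dt\bigr)^{b}<\infty$ while $\int_{0}^{1}f_{b}\,dt=\infty$. Since any BFS over $(0,1)$ is contained in $L_{1}(0,1)$ by \textbf{(P5)}, no power $X^{b}$ can coincide, even merely as a set, with a BFS; so $X(\rho)$ is not a power q-BFS although it is $b$-convex for all $b\in(0,1]$.

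Two conclusions follow. First, your proposed lemma (``a function quasi-norm equivalent to a norm coincides with a Banach function norm'') is false, so the ``if'' half of the ``in particular'' statement cannot be proved this way --- nor any other way, if ``BFS'' is read as in Remark \ref{rem:axioms}(ii), i.e.\ including \textbf{(P5)}; it becomes true (and your Fatou-ification then completes the proof) only under a reading of ``BFS'' without \textbf{(P5)}. Second, you should know that the paper does not bridge this gap either: its proof stops at the norm equivalence and silently identifies ``$\sigma$ is equivalent to a norm'' with ``$X(\sigma)$ coincides with a BFS''. Your attempt is thus more careful than the paper's own proof on this point, even though it leaves (necessarily) the same step open.
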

\begin{proof}
Assuming that the functional $f\mapsto\||f|^{1/b}\|_{X}^{b}$ is equivalent to a norm in $X^{b}$,
the $b$-convexity of $X$ follows immediately. 

On the contrary, assume now that $X$ is a $b$-convex q-BFL and
define\[
|||f|||:=\inf\left(\sum_{i=1}^{m}\||f_{i}|^{1/b}\|_{X}^{b}\right)\quad\mbox{ for any }\, f\in X^{b},\]
where the infimum is taken over all possible decompositions $f=\sum_{i=1}^{m}f_{i}$
with $f_{i}\in X^{b}$ and $m\in\N$. Clearly, $|||f|||\leq\||f|^{1/b}\|_{X}^{b}$
for all $f\in X^{b}$. As for the reverse inequality, given any $f\in X^{b}$
and any $\varepsilon>0$, there exists a decomposition $f=\sum_{i=1}^{m}f_{i}$
such that \[
\sum_{i=1}^{m}\||f_{i}|^{1/b}\|_{X}^{b}\leq|||f|||+\varepsilon.\]
Therefore, using the $b$-convexity of $X$,\[
\||f|^{1/b}\|_{X}^{b}\leq\left\Vert \left(\sum_{i=1}^{m}|f_{i}|\right)^{1/b}\right\Vert _{X}^{b}\leq C^{b}\left(\sum_{i=1}^{m}\||f_{i}|^{1/b}\|_{X}^{b}\right)\leq C^{b}|||f|||+C^{b}\varepsilon.\]
Since $\varepsilon>0$ is arbitrary, we conclude that $\||f|^{1/b}\|_{X}^{b}\leq C^{b}|||f|||.$

So we have proved that the two expressions $\||f|^{1/b}\|_{X}^{b}$
and $|||f|||$ are equivalent. We leave to the reader to verify that the functional $|||\cdot|||$ is a norm in $X^{b}$.
%
%
%
\end{proof}
\begin{rem}
Note that criteria for $b$-convexity of some function lattices $X$
have been studied -- see, e.g., \cite{KamMal04}, \cite{KM04}
for the case when $X$ is one of the Lorentz spaces $\Lambda^{q}(\omega)$ or $\Gamma^{q}(\omega)$, $0<q<\infty$.
\end{rem}
\smallskip{}

\begin{thm}
\label{thm:compactqBFS}Let $L=L(\Rn)$ be a power q-BFS.

\noindent \emph{(a)} If $L$ coincides with a rearrangement invariant q-BFS and $K\subset\overline{C_{0}(\Rn)}$ is precompact in $L$, then $K$
satisfies conditions \emph{(i)-(iii)} of \emph{Theorem \ref{thm:compactBFS}}.

\noindent \emph{(b)} Conversely, if $K\subset L_{a}$ satisfies conditions \emph{(i)-(iii)} of \emph{Theorem \ref{thm:compactBFS}},
then $K$ is precompact in $L$.
\end{thm}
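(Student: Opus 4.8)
The plan is to reduce everything to the BFS machinery already developed, by exploiting the power structure of $L$. By Definition~\ref{def:powerqBFS} there is $b\in(0,1]$ such that $L^{b}$ coincides with a BFS, and by \eqref{eq:powerformula} the two spaces are linked through the map $u\mapsto|u|^{b}$ via $\|u\|_{L}=\||u|^{b}\|_{L^{b}}^{1/b}$, equivalently $\|g^{b}\|_{L^{b}}=\|g\|_{L}^{b}$ for $g\ge0$. Throughout I would use the elementary inequalities $\bigl||s|^{b}-|t|^{b}\bigr|\le|s-t|^{b}$ (valid for $0<b\le1$, by subadditivity of $x\mapsto x^{b}$), $|a^{\pm}-c^{\pm}|\le|a-c|$, and $\chi_{E}^{b}=\chi_{E}$.

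For part~(a) I would run the necessity argument from the proof of Theorem~\ref{thm:compactBFS}(a) almost verbatim, observing that it uses only the rearrangement-invariance of $L$ and the inclusion $K\subset\overline{C_{0}(\Rn)}$, not the full BFS (or power) structure. Since conditions \emph{(i)--(iii)} and precompactness survive passage to an equivalent quasi-norm, we may assume the quasi-norm of $L$ is itself r.i., hence translation-invariant on $\Rn$. Precompactness gives \emph{(i)} at once; total boundedness together with $K\subset\overline{C_{0}(\Rn)}$ yields a finite set $S\subset C_{0}(\Rn)$ with every $u\in K$ within $\varepsilon$ (up to the quasi-norm constant) of some $\phi\in S$. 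Taking $G$ to contain all the supports, $u\chi_{\Rn\setminus G}=(u-\phi)\chi_{\Rn\setminus G}$ and \textbf{(P2)} give \emph{(ii)}. For \emph{(iii)} I would use translation invariance to write $\|u(\cdot+h)-\phi(\cdot+h)\|_{L}=\|u-\phi\|_{L}$, and the $L$-mean continuity of each $\phi\in C_{0}(\Rn)$: since $|\phi(\cdot+h)-\phi(\cdot)|\le\|\phi(\cdot+h)-\phi(\cdot)\|_{\infty}\,\chi_{G'}$ on a fixed compact neighbourhood $G'$ of $\mathrm{supp}\,\phi$, properties \textbf{(P2)} and \textbf{(P4)} give $\|\phi(\cdot+h)-\phi(\cdot)\|_{L}\le\|\phi(\cdot+h)-\phi(\cdot)\|_{\infty}\|\chi_{G'}\|_{L}\to0$; finiteness of $S$ then produces a uniform $\delta$.

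For part~(b) the core is that \emph{(i)--(iii)} for $K$ in $L$ force \emph{(i)--(iii)} for $K^{b}:=\{|u|^{b}:u\in K\}$ in $L^{b}$. Indeed, using the inequalities above with \textbf{(P2)} and $\|g^{b}\|_{L^{b}}=\|g\|_{L}^{b}$, I get $\|\Delta_{h}(|u|^{b})\|_{L^{b}}\le\|\Delta_{h}u\|_{L}^{b}$, $\||u|^{b}\chi_{\Rn\setminus G}\|_{L^{b}}=\|u\chi_{\Rn\setminus G}\|_{L}^{b}$ and $\||u|^{b}\|_{L^{b}}=\|u\|_{L}^{b}$, so each condition transfers (replacing $\varepsilon$ by $\varepsilon^{1/b}$). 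Since $L^{b}$ coincides with a BFS, Theorem~\ref{thm:compactBFS}(b) together with Remark~\ref{rem:coincide} shows $K^{b}$ is precompact in $L^{b}$. Because $K\subset L_{a}$ is equivalent to $K^{b}\subset(L^{b})_{a}$, the necessity part of Theorem~\ref{thm:theorem} gives $K^{b}\subset\mbox{UAC}(L^{b})$, and the identity $\|u\chi_{E_{n}}\|_{L}^{b}=\||u|^{b}\chi_{E_{n}}\|_{L^{b}}$ transfers this back to $K\subset\mbox{UAC}(L)$.

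The remaining point, which I expect to be the main obstacle, is that $K$ is locally precompact in measure: here $u\mapsto|u|^{b}$ destroys sign information, so precompactness of $K^{b}$ in $L^{b}$ only controls the magnitudes $|u_{k}|$. I would circumvent this by applying the same transfer to $K^{+}:=\{u^{+}:u\in K\}$ and $K^{-}:=\{u^{-}:u\in K\}$; these satisfy \emph{(i)--(iii)} in $L$ (the maps $x\mapsto x^{\pm}$ are $1$-Lipschitz and dominated by $|x|$, so \textbf{(P2)} preserves all three conditions), whence $(K^{+})^{b}$ and $(K^{-})^{b}$ are precompact in $L^{b}$. Given any sequence in $K$, I would extract, via Lemma~\ref{star} applied to $L^{b}$, a subsequence along which $(u_{k_{j}}^{+})^{b}$ and $(u_{k_{j}}^{-})^{b}$ converge $\mu$-a.e.; then $u_{k_{j}}^{\pm}$ converge a.e., so $u_{k_{j}}=u_{k_{j}}^{+}-u_{k_{j}}^{-}$ converges a.e., hence locally in measure. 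With $K\subset L_{a}$, local precompactness in measure, and $K\subset\mbox{UAC}(L)$ all in hand, Theorem~\ref{thm:theorem} yields the precompactness of $K$ in $L$.
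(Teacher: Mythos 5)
Your proof is correct and takes essentially the same approach as the paper's: transfer conditions (i)--(iii) to the powered positive and negative parts inside the BFS $L^{b}$, use Theorem \ref{thm:compactBFS}(b) with Remark \ref{rem:coincide} to get precompactness there, extract a.e.\ convergent subsequences of $u^{+}_{k}$ and $u^{-}_{k}$ to obtain local precompactness in measure of $K$, deduce $K\subset\mbox{UAC}(L)$ via the necessity part of Theorem \ref{thm:theorem}, and conclude with Theorem \ref{thm:theorem} again --- the paper differs only in cosmetic details (it diagonalizes using \textbf{(P5)} over an exhaustion by compacts where you invoke Lemma \ref{star}, and it works with $K_{+}^{b}$, $K_{-}^{b}$ rather than $\{|u|^{b}\}$ in the UAC step). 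The one point you omit is the complex-valued case, where $u^{\pm}$ is undefined; the paper disposes of it in a single sentence by applying the real-valued argument successively to real and imaginary parts.
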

\begin{proof}
With the hypotheses assumed, the proof of the necessity of conditions
(i)-(iii) of Theorem \ref{thm:compactBFS} essentially follows the corresponding part of the proof of that theorem, with
slight modifications. Therefore, we prove only that these conditions
are sufficient for the precompactness of $K\subset L_{a}$ in $L$ when $L$ is a~power q-BFS. 
Since the result follows immediately from Remark \ref{rem:coincide}
if $b=1$ in Definition~\ref{def:powerqBFS}, suppose that $b\in(0,1)$. 

We shall assume first that $K$ contains only real
functions. Denote 
\begin{equation}
K_+^{b} := \{f_{+}^{b}:\, f\in K\},
\label{eq:4sets}
\end{equation}
where the symbol $+$ in subscript indicates positive parts of functions and sets. It is easy
to see that $K_+^{b}$ is a subset of $L^{b}$, the latter coinciding,
by Definition~\ref{def:powerqBFS}, with a BFS. We shall show
that conditions (i)-(iii) of Theorem \ref{thm:compactBFS} hold for
$K_+^{b}$ and $L^{b}$ instead of $K$ and $L$, respectively. Therefore, by Remark \ref{rem:coincide}, we conclude 
that $K_+^{b}$ is precompact in $L^{b}$. Indeed, condition (i) for $K_+^{b}$ and $L^{b}$ follows from condition
(i) for $K$ and $L$, since
\[
\|f_{+}^{b}\|_{L^{b}} := \| (f_{+}^{b})^{1/b} \|_L^b = \| f_{+} \|_L^b \leq\|f\|_{L}^{b}.
\]
Similarly, condition (ii) for $K_+^{b}$ and $L^{b}$ follows from condition (ii)
for $K$ and $L$, due to the inequality
\begin{equation}
\|\left(f_{+}^{b}\right)\chi_{\Rn\setminus G}\|_{L^{b}} \leq \|f\chi_{\Rn\setminus G}\|_{L}^{b}.\label{eq:inequalities}
\end{equation}
Finally, condition (iii) for $K_+^{b}$ and $L^{b}$ follows from the
corresponding condition for $K$ and $L$ and the estimate
\begin{eqnarray*}
\|f_+^{b}(\cdot+h)-f_+^{b}(\cdot)\|_{L^{b}} & \leq & \||f_+(\cdot+h)-f_+(\cdot)|^{b}\|_{L^{b}}\\
 & = & \|f_+(\cdot+h)-f_+(\cdot)\|_{L}^{b}\\
 & \leq & \|f(\cdot+h)-f(\cdot)\|_{L}^{b};
\end{eqnarray*}
the first inequality is a consequence of the fact that $b\in(0,1)$.
Hence, as mentioned above, $K_+^{b}$ is precompact in $L^{b}$.

Similarly, we can conclude that $K_-^b$ is precompact in $L^b$, where
\begin{equation}
K_-^{b} := \{f_{-}^{b}:\, f\in K\},
\label{eq:Kminusb}
\end{equation}
the symbol $-$ in subscript indicating negative parts of functions and sets.

Now take any sequence $\{f_{k}\}_{k}\subset K$ and consider the sequence
\begin{equation}
\{(f_{k})_{+}^{b}\}_{k} \subset K_+^b.
\label{eq:seqK-b}
\end{equation}

Due to the precompactness of $K_+^{b}$ in $L^{b}$, there is a subsequence
$\{(f_{\tau_1(k)})_{+}^{b}\}_{k}$ converging in $L^{b}$, say to $g$.
Consider an increasing sequence $\{E_{m}\}_{m\in\N}$ of compact subsets
of $\Rn$ such that $\cup_{m\in\N}E_{m}=\Rn$. Since $L^{b}$ coincides
with a BFS, denoting by $\lambda$ the Lebesgue measure in $\Rn$
and using \textbf{(P5)}, we obtain that  
\begin{equation}
\int_{E_{m}}|(f_{\tau_1(k)})_{+}^{b}-g|\, d\lambda\underset{k}{\longrightarrow}0 \quad \mbox{for any } m\in\N.
\label{eq:L1conv}
\end{equation}
%
%
\noindent Consequently, there are subsequences $\{(f_{(\tau_2 \circ \tau_1)(k)})_{+}^{b}\}_{k}$,
$\{(f_{(\tau_3 \circ \tau_2 \circ \tau_1)(k)})_{+}^{b}\}_{k}$, $\ldots$
converging $\lambda$-a.e. to $g$, respectively in $E_{2}$, $E_{3}$,
$\ldots$. Then the diagonal sequence 
\[
(f_{(\tau_2 \circ \tau_1)(2)})_{+}^{b},\, (f_{(\tau_3 \circ \tau_2 \circ \tau_1)(3)})_{+}^{b},\, (f_{(\tau_4 \circ \tau_3 \circ \tau_2 \circ \tau_1)(4)})_{+}^{b},\,\ldots
\]
(which is a subsequence of $\{(f_{k})_{+}^{b}\}_{k}$) converges $\lambda$-a.e.
to $g$. Denote it simply by $\{(f_{\tau(k)})_{+}^{b}\}_{k}$. Consequently,
\begin{equation}
(f_{\tau(k)})_{+}\underset{k}{\longrightarrow}|g|^{1/b}\quad\lambda\mbox{-a.e.}.\label{eq:limitreal}
\end{equation}
We recall that $g\in L^{b}$ therefore $|g|^{1/b}\in L$. 
Now, repeating the procedure above, but starting with $\{(f_{\tau(k)})_{-}^{b}\}_{k} \subset K_-^b$ instead of (\ref{eq:seqK-b}), we arrive at
\begin{equation}
(f_{\sigma(k)})_{-}\underset{k}{\longrightarrow}|h|^{1/b}\quad\lambda\mbox{-a.e.}, \quad \mbox{where }\, h \in L^b. \label{eq:limitreal-}
\end{equation}
Conclusions \eqref{eq:limitreal} and \eqref{eq:limitreal-} imply
\[
f_{\sigma(k)}\underset{k}{\longrightarrow}|g|^{1/b}-|h|^{1/b}\quad\lambda\mbox{-a.e.}.
\]
According to what was mentioned above, $|g|^{1/b}-|h|^{1/b}\in L$.
Since convergence $\lambda\mbox{-a.e.}$ yields local convergence
in measure, we have just proved that 
\begin{equation}
K \;\; \mbox{ is locally precompact in measure.}
\label{eq:precompact}
\end{equation}
The hypothesis $K\subset L_{a}$ implies
that both $K_+^{b}$ and $K_-^b$ (given, respectively, by \eqref{eq:4sets} and \eqref{eq:Kminusb}) are subsets of
$(L^{b})_{a}$. Together with the conclusion
that $K_+^{b}$ and $K_-^b$ are precompact in $L^{b}$ and Theorem \ref{thm:theorem}, this yields $K_+^{b}, K_-^b\subset\mbox{UAC}(L^{b})$. Since also
\begin{eqnarray*}
\|f\chi_{E_{m}}\|_{L} & = & \|\big(f_{+}\big)\chi_{E_{m}}-\big(f_{-}\big)\chi_{E_{m}}\|_{L}\\
 & \lesssim & \|\big(f_{+}\big)\chi_{E_{m}}\|_{L}+\|\big(f_{-}\big)\chi_{E_{m}}\|_{L}\\
 & = & \|\big(f_{+}^{b}\big)\chi_{E_{m}}\|_{L^{b}}^{1/b}+\|\big(f_{-}^{b}\big)\chi_{E_{m}}\|_{L^{b}}^{1/b}\end{eqnarray*}
for any $f\in K$ and any sequence $\{E_{m}\}_{m}$ of $\lambda$-measurable subsets of $\Rn$ with $E_{m}\underset{m}{\longrightarrow}\emptyset\;\lambda\mbox{-a.e.}$, we see that $K\subset\mbox{UAC}(L)$. 
This fact, \eqref{eq:precompact} and Theorem \ref{thm:theorem} imply that $K$ is precompact in $L$.

In the case in which $K$ contains complex functions, we get the result applying the above method successively to the real parts and then to the imaginary parts of functions.
\end{proof}
\begin{rem}
\label{rem:compactqBFS} In part (a) of Theorem \ref{thm:compactqBFS} one can use the assumptions
that $L^b$ coincides with an r.i. BFS, $L=L_{a}$ and $K\subset L$ instead of the hypotheses that $L$ coincides
with an r.i. q-BFS and $K\subset\overline{C_{0}(\Rn)}$. Indeed, from the assumptions made now one can prove that $\overline{C_{0}(\Rn)}=L \supset K$, with arguments similar to the ones used above (replacing Theorem \ref{thm:theorem} by Lemma \ref{lem:lemma}), and that $L$ coincides with an r.i. q-BFS.
\end{rem}

\section{Abstract Besov spaces and compact embeddings}

\label{sec:5}

In what follows, $\mathbb{R}^{n}$ and $(0,1)$ are endowed with the
corresponding Lebesgue measures. The \emph{modulus of continuity}
of a function $f$ in a q-BFS $L=L(\Rn)$ over $\mathbb{R}^{n}$
is given by 

  \[     \omega_L(f,t) := \sup_{{h \in \Rn} \atop {| h | \leq t}} \| \Delta_h f \|_L, \quad t>0. \]

The following is an extension of the definition of generalized Besov spaces  considered in \cite{GPS12} to the setting of quasi-Banach function spaces. 

\begin{defn}
\label{def:Besov}Let $L=L(\Rn)$ be a q-BFS and let $Y$ be a q-BFL over $(0,1)$ satisfying
\begin{equation}
\|1\|_{Y}=\|\chi_{(0,1)}\|_{Y}=\infty.\label{eq:assumpinfty}\end{equation}
The abstract Besov space $B(L,Y)$ is the set of all $f\in L$
such that the quasi-norm\[
\|f\|_{B(L,Y)}:=\|f\|_{L}+\|\omega_{L}(f,\cdot)\|_{Y}\]
is finite.\end{defn}
\begin{rem}
\label{rem:varphiY-1} The assumption $\|1\|_{Y}=\infty$
in the Definition \ref{def:Besov} is natural (otherwise $B(L,Y)=L$, which is not of interest).
Also notice that assumption \eqref{eq:assumpinfty} violates axiom
\textbf{(P4)}. Consequently, $Y$ is not a q-BFS.

\medskip{}

In what follows we consider the space 
$$L(\Omega):=\{f|_{\Omega}:\, f\in L\},$$
for a measurable set $\Omega\subset\Rn$ with $|\Omega|_{n}>0$, 
quasi-normed by
\[
\|f|_{\Omega}\|_{L(\Omega)}:=\|f\chi_{\Omega}\|_{L}.\]
\end{rem}
\begin{thm}
\label{thm:compactinL}Let $L=L(\Rn)$ be a power q-BFS.
In the case that $L$ does not coincide with a BFS, we assume
that $L=L_{a}$. Let $B(L,Y)$ be an abstract Besov space and let $\Omega$ be a bounded measurable
subset of $\Rn$ with $|\Omega|_{n}>0$. Then 
\[
B(L,Y)\hookrightarrow\hookrightarrow L(\Omega)\]
{\rm (}this means that the restriction operator $f\mapsto f|_{\Omega}$
is compact from $B(L,Y)$ into $L(\Omega)${\rm )}.\end{thm}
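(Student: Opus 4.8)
The plan is to prove that the restriction operator $R\colon f\mapsto f|_\Omega$ maps the closed unit ball $K:=\{f\in B(L,Y):\|f\|_{B(L,Y)}\le1\}$ onto a precompact subset of $L(\Omega)$; compactness then follows by linearity. For $f\in K$ we have both $\|f\|_L\le1$ and $\|\omega_L(f,\cdot)\|_Y\le1$. The first step is to convert the latter bound into a \emph{uniform} modulus-of-continuity estimate. Since $t\mapsto\omega_L(f,t)$ is non-decreasing, for each fixed $t\in(0,1)$ one has $\omega_L(f,\cdot)\ge\omega_L(f,t)\,\chi_{(t,1)}$ on $(0,1)$, whence, by \textbf{(P1)} and \textbf{(P2)} for $Y$,
\[
\omega_L(f,t)\,\|\chi_{(t,1)}\|_Y\le\|\omega_L(f,\cdot)\|_Y\le1 .
\]
By \textbf{(P3)} and the defining assumption \eqref{eq:assumpinfty}, $\|\chi_{(t,1)}\|_Y\uparrow\|1\|_Y=\infty$ as $t\downarrow0$; hence $\sup_{f\in K}\omega_L(f,t)\le\|\chi_{(t,1)}\|_Y^{-1}\to0$ as $t\to0^+$.

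The second step removes the boundary effect caused by restricting to $\Omega$. Rather than extending $f|_\Omega$ by zero, I fix a cut-off $\psi\in C_0^\infty(\Rn)$ with $\psi\equiv1$ on a neighbourhood of the compact set $\overline\Omega$, so that $\psi=1$ on $\Omega$ and $\mathrm{supp}\,\psi$ is compact. Then $(f\psi)|_\Omega=f|_\Omega$ for every $f\in L$, while the map $\Pi\colon g\mapsto g|_\Omega$ is norm-non-increasing (hence continuous) from $L$ into $L(\Omega)$, because $\|\Pi g\|_{L(\Omega)}=\|g\chi_\Omega\|_L\le\|g\|_L$ by \textbf{(P2)}. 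Since $\Pi$ carries $\widehat K:=\{f\psi:f\in K\}$ onto $R(K)$, it suffices to show that $\widehat K$ is precompact in $L$.

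To this end I verify conditions (i)-(iii) of Theorem~\ref{thm:compactBFS} for $\widehat K$. Boundedness (i) holds since $\|f\psi\|_L\le\|\psi\|_\infty\|f\|_L\le\|\psi\|_\infty$, and (ii) is trivial because each $f\psi$ vanishes outside the fixed compact set $G:=\mathrm{supp}\,\psi$. For (iii), the decomposition
\[
\Delta_h(f\psi)=(\Delta_h f)\,\psi(\cdot+h)+f\,\Delta_h\psi
\]
together with \textbf{(P2)}, the quasi-triangle inequality, $\|f\|_L\le1$ and $|\Delta_h\psi|\le\|\nabla\psi\|_\infty|h|$ gives
\[
\|\Delta_h(f\psi)\|_L\lesssim\|\psi\|_\infty\,\omega_L(f,|h|)+\|\nabla\psi\|_\infty\,|h| ,
\]
and both terms tend to $0$ uniformly in $f\in K$ as $|h|\to0$, by the first step. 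Thus $\widehat K$ satisfies (i)-(iii). If $L$ coincides with a BFS, Remark~\ref{rem:coincide} (the BFS-version of Theorem~\ref{thm:compactBFS}(b)) shows $\widehat K$ is precompact in $L$; otherwise $L=L_a$ by hypothesis, so $\widehat K\subset L_a$ and Theorem~\ref{thm:compactqBFS}(b) applies. Either way $\widehat K$ is precompact in $L$, and therefore $R(K)=\Pi(\widehat K)$ is precompact in $L(\Omega)$.

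The step I expect to be the main obstacle is exactly condition (iii), and the device that makes it work is the smooth cut-off $\psi$ in place of $\chi_\Omega$: with the sharp cut-off the translation difference $\Delta_h(f\chi_\Omega)$ contains the boundary term $f(\cdot+h)\,\Delta_h\chi_\Omega$, whose norm need not be small (for $L=L_\infty$ and $f\equiv1$ it equals $\|\Delta_h\chi_\Omega\|_\infty=1$), so (iii) genuinely fails for $\{f\chi_\Omega\}$. Replacing $\chi_\Omega$ by $\psi$ leaves the restriction to $\Omega$ unchanged yet turns the boundary term into $f\,\Delta_h\psi=O(|h|)$ uniformly, which is precisely what allows the reduction to the already-established compactness criteria.
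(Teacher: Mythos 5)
Your proposal is correct and follows essentially the same route as the paper's proof: a Lipschitz/smooth cut-off supported near $\overline\Omega$ reduces the problem to precompactness of $\{\psi f\}$ in $L$, the $Y$-quasi-norm bound is converted into a uniform modulus-of-continuity estimate via monotonicity of $\omega_L(f,\cdot)$, \textbf{(P2)}, \textbf{(P3)} and \eqref{eq:assumpinfty}, and conditions (i)--(iii) of Theorem~\ref{thm:compactBFS} are then checked and combined with Remark~\ref{rem:coincide} or Theorem~\ref{thm:compactqBFS}(b) according to whether $L$ coincides with a BFS. The only differences (a $C_0^\infty$ cut-off instead of a Lipschitz one, and making the continuity of the restriction map $\Pi$ explicit) are cosmetic.
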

\begin{proof}
We are going to prove that
\[
\{f|_{\Omega}:\, f\in B(L,Y), \|f\|_{B(L,Y)}\leq1\}\]
is precompact in $L(\Omega)$.

Since $\Omega$ is bounded, there is $R_{0}\in(0,\infty)$ such that
$\overline{\Omega}\subset B(0,R_{0})$. Take a~Lipschitz continuous
function $\varphi$ on $\Rn$ satisfying $0\leq\varphi\leq1$, $\varphi=1$
on $\Omega$ and $\varphi=0$ on $\Rn\setminus B(0,R_{0}+1)$. Then, for all
$f\in L$,
\[
\|f|_{\Omega}\|_{L(\Omega)}=\|f\chi_{\Omega}\|_{L}=\|\varphi f\chi_{\Omega}\|_{L}\leq\|\varphi f\|_{L}.\]
Therefore, it is sufficient to prove that the set\[
K:=\{\varphi f:\, f\in B(L,Y),\|f\|_{B(L,Y)}\leq1\}\]
is precompact in $L$.

By Theorems \ref{thm:compactBFS}, \ref{thm:compactqBFS} and Remark
\ref{rem:coincide}, it is enough to verify that conditions (i)-(iii)
of Theorem \ref{thm:compactBFS} hold for the $K$ and $L$ considered
here.

(i) Let $\varphi f\in K$, with $\|f\|_{B(L,Y)}\leq1$. Since $\|\varphi f\|_{L}\leq\|f\|_{L}\leq\|f\|_{B(L,Y)}$,
the set $K$ is bounded in $L$.

(ii) The set $G:=B(0,R_{0}+1)$ is compact in $\Rn$. If
$\varphi f\in K$, then $\|\varphi f\chi_{\Rn\setminus G}\|_{L}=0<\varepsilon$
for all $\varepsilon>0$.

(iii) Given $f\in L$ and $x,h\in\Rn$, \begin{eqnarray}
|\Delta_{h}(\varphi f)(x)| & \leq & \|\varphi\|_{\infty,\Rn}|\Delta_{h}f(x)|+\|\Delta_{h}\varphi\|_{\infty,\Rn}|f(x)|\nonumber \\
 & \lesssim & |\Delta_{h}f(x)|+|h||f(x)|.\label{eq:differences}\end{eqnarray}
Hence, \[
\|\Delta_{h}(\varphi f)\|_{L}\lesssim\|\Delta_{h}f\|_{L}+|h|\|f\|_{L}.\]
If $\varphi f\in K$, with $\|f\|_{B(L,Y)}\leq1$,
then conditions \textbf{(P2)}, \textbf{(P1)}, together with the facts $Y \subset \mathcal{M}_0(0,1)$ (recall Lemma \ref{star}) and that $\omega_{L}(f,\cdot)$ is non-decreasing, imply that 
\begin{equation}
1\geq\|f\|_{B(L,Y)}\geq\|\omega_{L}(f,\cdot)\chi_{(T,1)}\|_{Y}\geq\omega_{L}(f,T)\|\chi_{(T,1)}\|_{Y}\quad \mbox{for } T\in(0,1).
\label{eq:less1}
\end{equation}
Moreover, \textbf{(P3)} and \eqref{eq:assumpinfty} yield that $\lim_{T\to0+}\|\chi_{(T,1)}\|_{Y}$ $=$ $\infty$. Consequently, we obtain from \eqref{eq:less1} that
\[
\lim_{T\to0+}\omega_{L}(f,T)=0\quad\mbox{uniformly with respect to }f\,\mbox{ such that }\,\varphi f\in K.
\]
Combining with \eqref{eq:differences} and recalling that $\|f\|_{L}\leq\|f\|_{B(L,Y)}$,
we get that\[
\forall\varepsilon>0,\exists\delta>0:\forall(\varphi f)\in K,\,|h|<\delta\,\Rightarrow\|\Delta_{h}(\varphi f)\|_{L}<\varepsilon.\]

Applying Theorem \ref{thm:compactBFS} and Remark \ref{rem:coincide},
or Theorem \ref{thm:compactqBFS}, we obtain the precompactness of
$K$ in $L$ and the proof is complete.
\end{proof}
\begin{cor}
\label{cor:KsubUAC(Z)}Let $L=L(\Rn)$ be a power q-BFS.
In the case that $L$ does not coincide with a BFS, assume 
that $L=L_{a}$. Let $B(L,Y)$ be an abstract Besov space. Let $\Omega$ be a bounded measurable
subset of $\Rn$ with $|\Omega|_{n}>0$ and let $Z(\Omega)$ be a q-BFS
over $\Omega$ $($with the restricted Lebesgue measure$)$. Assume that
the restriction operator maps $B(L,Y)$ into $(Z(\Omega))_{a}$.
Then this operator is compact if and only if \begin{equation}
K:=\{f|_{\Omega}:\, f\in B(L,Y), \|f\|_{B(L,Y)}\leq1\}\subset \mbox{UAC}(Z(\Omega)).\label{eq:KsubUAC}\end{equation}
\end{cor}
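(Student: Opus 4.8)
The plan is to read off the result from Corollary~\ref{cor:corollary}, applied with the quasi-normed space $X := B(L,Y)$, with the target q-BFS (the space denoted $Y$ in that corollary) taken to be $Z(\Omega)$, and with $T$ the restriction operator $f \mapsto f|_\Omega$. By hypothesis $T$ maps $B(L,Y)$ into $(Z(\Omega))_a$, so Corollary~\ref{cor:corollary} applies and asserts that $T$ is compact if and only if it is locally compact in measure and $\{Tf:\,\|f\|_{B(L,Y)}\leq1\}=K\subset UAC(Z(\Omega))$. Thus the entire statement reduces to showing that the condition ``$T$ locally compact in measure'' holds automatically, that is, that $K$ is always precompact in the topology of local convergence in measure on $\Omega$; once this is in hand, the equivalence \eqref{eq:KsubUAC} follows immediately.

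To supply this missing ingredient, I would first invoke Theorem~\ref{thm:compactinL}, whose hypotheses coincide with those of the present corollary, to conclude that the restriction operator is compact from $B(L,Y)$ into $L(\Omega)$; in particular $K$ is precompact in $L(\Omega)$. Next I would note that $L(\Omega)$, quasi-normed by $\|g\|_{L(\Omega)}=\|g\chi_\Omega\|_L$ (identifying $g$ with its extension by zero to $\Rn$), is itself a q-BFL over $\Omega$ endowed with the restricted Lebesgue measure: properties \textbf{(P1)--(P3)} transfer at once from the function quasi-norm of $L$ through extension by zero. Consequently Lemma~\ref{star} yields the continuous embedding $L(\Omega)\hookrightarrow\mathcal{M}_0(\Omega)$.

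Since the topologies involved are metrizable, I would argue via sequential precompactness: every sequence in $K$ has a subsequence converging in $L(\Omega)$, hence, by the embedding just recorded, converging locally in measure on $\Omega$. Therefore $K$ is precompact in $\mathcal{M}_0(\Omega)$, i.e.\ $T$ is locally compact in measure, and combining this with Corollary~\ref{cor:corollary} finishes the proof. The one point that must be handled with care, rather than a genuine obstacle, is the observation that the topology of local convergence in measure on $\Omega$ is intrinsic to the measure space $(\Omega,\text{Lebesgue})$ and hence the same whether one reaches it through $L(\Omega)$ or through $Z(\Omega)$; this is precisely what licenses transporting the precompactness coming from Theorem~\ref{thm:compactinL} (phrased via $L(\Omega)$) into the ``locally compact in measure'' hypothesis of Corollary~\ref{cor:corollary} (phrased via $Z(\Omega)$).
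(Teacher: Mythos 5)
Your proposal is correct and follows essentially the same route as the paper: Theorem \ref{thm:compactinL} gives precompactness of $K$ in $L(\Omega)$, this is upgraded to local precompactness in measure on $\Omega$, and Corollary \ref{cor:corollary} then yields the stated equivalence. The only cosmetic difference is that you justify the middle step directly via Lemma \ref{star} (after observing that $L(\Omega)$ is a q-BFL over $\Omega$), whereas the paper cites Theorem \ref{thm:theorem} together with Remark \ref{rem:remark}; these amount to the same thing, since that direction of Theorem \ref{thm:theorem} ultimately rests on Lemma \ref{star}.
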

\begin{proof}
As a consequence of Theorem \ref{thm:compactinL}, the set $K$ in \eqref{eq:KsubUAC}
is precompact in $L(\Omega)$. Therefore, by Theorem \ref{thm:theorem}
and Remark \ref{rem:remark}, such a $K$ is locally precompact in
measure in $\mathcal{M}_{0}(\Omega)$. Hence, by Corollary \ref{cor:corollary}, the set $K$ is precompact
in $Z(\Omega)$ if and only if $K\subset \mbox{UAC}(Z(\Omega))$.\end{proof}
\begin{rem}
\label{rem:KsubUAC(Z)}The assumptions for the {}``if'' part of the Corollary \ref{cor:KsubUAC(Z)} can be relaxed. In fact, from Definition \ref{def:unifabscont} and Theorem
\ref{thm:theorem}, we have that if \eqref{eq:KsubUAC} holds, then
the restriction operator takes $B(L,Y)$ compactly into $Z(\Omega)$.
That is, we don't need to assume \emph{a priori} that the restrictions
of the elements of $B(L,Y)$ are in $(Z(\Omega))_{a}$ (since this
is a consequence of \eqref{eq:KsubUAC}, by the
homogeneity of the quasi-norm). Actually, here we don't even have
to assume \emph{a priori} that those restrictions belong to $Z(\Omega)$
(again, this is a consequence of \eqref{eq:KsubUAC}).
\end{rem}

\section{Applications}

\label{sec:6}

We are going to apply Corollary \ref{cor:KsubUAC(Z)} and Remark
\ref{rem:KsubUAC(Z)} to the case when $Z(\Omega)$ is the
Lorentz-type space $L_{p,q;w}(\Omega)$ introduced in Section \ref{sec:2}.
We start with the following criterion for a subset of measurable
functions to have uniformly absolutely continuous quasi-norm in $L_{p,q;w}(\Omega)$.
\begin{prop}
\label{pro:limsupint}Let $0<p,q\leq\infty$, let $\Omega$
be a measurable subset of $\Rn$ with $0<|\Omega|_{n}<\infty$ and
$w\in\mathcal{W}(0,|\Omega|_{n}).$ Assume that \eqref{eq:EGO(2.1)}
and \eqref{eq:EGO(2.5)} hold. If $K\subset\mathcal{M}(\Omega)$ is
such that
\begin{equation}
\lim_{\delta\to0+}\sup_{u\in K} \| t^{1/p-1/q}w(t)u^{*}(t) \|_{q;(0,\delta)} = 0,\label{eq:limsup}
\end{equation}
then $K\subset \mbox{UAC}(L_{p,q;w}(\Omega))$.\end{prop}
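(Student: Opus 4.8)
The plan is to reduce the statement to the measure-theoretic condition \eqref{eq:simplerUAC} of Proposition \ref{pro:UACimplies} and then invoke its converse, valid because $|\Omega|_n<\infty$ (Remark \ref{rem:converse}(i)). The key ingredient is a single rearrangement estimate. For any $u\in\mathcal{M}(\Omega)$ and any measurable $E\subset\Omega$ one has, on the one hand, $(u\chi_E)^*\leq u^*$ on $(0,|\Omega|_n)$ (since $|u\chi_E|\leq|u|$ pointwise, the distribution functions are ordered, hence so are the non-increasing rearrangements in \eqref{eq:EGO(2.3)}), and on the other hand $(u\chi_E)^*(t)=0$ for $t\geq|E|_n$ (because $u\chi_E$ vanishes outside a set of measure $|E|_n$). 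Substituting into the defining quasi-norm \eqref{eq:EGO(2.2)} gives
\[
\|u\chi_E\|_{p,q;w;\Omega}=\|t^{1/p-1/q}w(t)(u\chi_E)^*(t)\|_{q;(0,|E|_n)}\leq\|t^{1/p-1/q}w(t)u^*(t)\|_{q;(0,|E|_n)},
\]
a bound that works uniformly for $q\in(0,\infty]$ (for $q=\infty$ the $L_q$-quasi-norm is an essential supremum).

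First I would record that $K$ actually lies in $L_{p,q;w}(\Omega)$, so that membership in the absolutely continuous part is meaningful. The near-origin part of the quasi-norm is finite for each $u\in K$ by \eqref{eq:limsup} (the limit being $0$ forces $\|t^{1/p-1/q}w(t)u^*(t)\|_{q;(0,\delta)}<\infty$ for all small $\delta$, whence $u^*(\delta)<\infty$ by monotonicity of $u^*$), while on $(\delta,|\Omega|_n)$ one estimates $u^*(t)\leq u^*(\delta)<\infty$ and $\|t^{1/p-1/q}w(t)\|_{q;(\delta,|\Omega|_n)}\leq B_{p,q;w}(|\Omega|_n)<\infty$ by \eqref{eq:EGO(2.1)}. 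Adding the two pieces shows $\|u\|_{p,q;w;\Omega}<\infty$, i.e. $u\in L_{p,q;w}(\Omega)$.

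Next, given $\varepsilon>0$, hypothesis \eqref{eq:limsup} furnishes $\delta>0$ with $\sup_{u\in K}\|t^{1/p-1/q}w(t)u^*(t)\|_{q;(0,\delta)}<\varepsilon$. Combining this with the displayed estimate and the monotonicity of the $L_q$-quasi-norm under shrinking the underlying interval, every $u\in K$ and every measurable $E\subset\Omega$ with $|E|_n<\delta$ satisfy $\|u\chi_E\|_{p,q;w;\Omega}<\varepsilon$; this is exactly condition \eqref{eq:simplerUAC}. Since $\Omega$ has finite measure, Remark \ref{rem:converse}(i) then gives $K\subset\mbox{UAC}(L_{p,q;w}(\Omega))$, and the inclusion $K\subset(L_{p,q;w}(\Omega))_a$ presupposed by Definition \ref{def:unifabscont} comes for free (each $u\in K$ has absolutely continuous quasi-norm by Remark \ref{rem:converse}(ii)). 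Alternatively, one may argue directly: by the remark after Definition \ref{def:unifabscont} it suffices to treat non-increasing sequences $E_n\downarrow\emptyset$ $\mu$-a.e., and continuity of Lebesgue measure from above yields $|E_n|_n\to 0$, so eventually $|E_n|_n<\delta$ and the same estimate forces $\sup_{u\in K}\|u\chi_{E_n}\|_{p,q;w;\Omega}\to 0$.

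The only real obstacle is the rearrangement estimate itself: verifying $(u\chi_E)^*\leq u^*$ together with the vanishing of $(u\chi_E)^*$ beyond $|E|_n$, and transporting this through the quasi-norm \eqref{eq:EGO(2.2)}. Once that is in place, the argument is elementary, relying only on the monotonicity of the $L_q$-quasi-norm over shrinking intervals, the finiteness $B_{p,q;w}(|\Omega|_n)<\infty$ from \eqref{eq:EGO(2.1)}, and a direct application of the hypothesis \eqref{eq:limsup}; the $\Delta_2$-condition \eqref{eq:EGO(2.5)} enters only insofar as it guarantees that $\|\cdot\|_{p,q;w;\Omega}$ is a genuine quasi-norm.
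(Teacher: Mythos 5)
Your proof is correct and follows essentially the same route as the paper's: the same splitting of the quasi-norm over $(0,\delta)$ and $(\delta,|\Omega|_n)$ to get $K\subset L_{p,q;w}(\Omega)$, the same key estimate $\|u\chi_E\|_{p,q;w;\Omega}\leq\|t^{1/p-1/q}w(t)u^*(t)\|_{q;(0,|E|_n)}$ via $(u\chi_E)^*\leq u^*$ supported in $[0,|E|_n]$, and the same appeal to Remark \ref{rem:converse}(i). If anything, you are slightly more careful than the paper in spelling out why $u^*(\delta)<\infty$ and why $K\subset(L_{p,q;w}(\Omega))_a$, which the paper leaves implicit.
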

\begin{proof}
Given any $\delta\in(0,|\Omega|_{n})$ and $u \in K$,
\begin{eqnarray*}
\lefteqn{\|t^{1/p-1/q}w(t)u^{*}(t)\|_{q;(0,|\Omega|_{n})}}\\
 & \lesssim & \|t^{1/p-1/q}w(t)u^{*}(t)\|_{q;(0,\delta)}+u^{*}(\delta)\|t^{1/p-1/q}w(t)\|_{q;(\delta,|\Omega|_{n})}\\
 & \leq & \|t^{1/p-1/q}w(t)u^{*}(t)\|_{q;(0,\delta)}+u^{*}(\delta)B_{p,q;w}(|\Omega|_{n}) =: V(\delta,u).
\end{eqnarray*}
Combining with \eqref{eq:limsup} and \eqref{eq:EGO(2.1)}, one can choose $\delta\in(0,|\Omega|_{n})$ such that $V(\delta,u)$ is finite. Therefore, $K\subset L_{p,q;w}(\Omega)$.
Moreover, given any $\varepsilon>0$, property \eqref{eq:limsup} implies
that there is $\delta>0$ such that for all $u\in K$ and all measurable
$E$, with $|E|_{n}<\delta$, \[
\|u\chi_{E}\|_{p,q;w;\Omega}\leq \| t^{1/p-1/q}w(t)u^{*}(t)\chi_{[0,\delta)}(t) \|_{q;(0,|\Omega|_{n})} < \varepsilon,\]
which proves that $K\subset \mbox{UAC}(L_{p,q;w}(\Omega))$, due to Remark
\ref{rem:converse}(i).
\end{proof}


Let $0<s<1$ and $0<p,q\leq\infty$. Choosing $L=L_{p}(\Rn)$ and $Y=\{g\in\mathcal{M}(0,1):\,\|g\|_{Y}=\|t^{-s-1/q}g(t)\|_{q;(0,1)}<\infty\}$ in Definition \ref{def:Besov}, we obtain the well-known Besov spaces (of small smoothness $s$, defined by differences), equipped with the quasi-norm
\[
\|f\|_{B_{p,q}^{s}(\Rn)}:=\|f\|_{p}+\|t^{-s-1/q}\omega_{L_{p}(\Rn)}(f,t)\|_{q;(0,1)}.
\]

Recently the so-called growth envelope functions for such spaces have been obtained in \cite[Prop. 2.5, Thm. 2.7]{HarSch09}. We recall the result.
\begin{prop}
\label{pro:growthenv}Let $0<s<1$, $0<p,q\leq\infty$. 

\emph{(i)} If $s<\frac{n}{p}$, then \[
\sup_{\|f\|_{B_{p,q}^{s}(\Rn)}\leq1}f^{*}(t)\approx t^{-1/p+s/n}\quad\mbox{as }t\to0+.\]

\emph{(ii)} If $s=\frac{n}{p}$ and $q>1$, then\[
\sup_{\|f\|_{B_{p,q}^{s}(\Rn)}\leq1}f^{*}(t)\approx|\log t|^{1/q'}\quad\mbox{as }t\to0+.\]

\emph{(iii)} In all the remaining cases,\[
\sup_{\|f\|_{B_{p,q}^{s}(\Rn)}\leq1}f^{*}(t)\approx1\quad\mbox{as }t\to0+.\]

\end{prop}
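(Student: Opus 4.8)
The plan is to compute the growth envelope function $\mathcal{E}(t):=\sup_{\|f\|_{B^s_{p,q}(\Rn)}\leq1}f^*(t)$ directly, by establishing a matching upper bound (an embedding estimate valid for every $f$ in the unit ball) and a lower bound (produced by explicit near-extremal functions). The three cases of the statement correspond exactly to the sign of $s-n/p$, with the borderline $s=n/p$ split further according to whether $q>1$. Throughout I abbreviate $\omega_p(f,t):=\omega_{L_p(\Rn)}(f,t)$ and recall that, since $0<s<1$, the difference-based quasi-norm of Definition \ref{def:Besov} is equivalent to the Littlewood--Paley quasi-norm of $B^s_{p,q}(\Rn)$.

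For the upper bounds I would invoke the sharp Sobolev-type embeddings of $B^s_{p,q}$ into rearrangement-invariant (Lorentz--Zygmund) spaces. In the subcritical case $s<n/p$ one has $B^s_{p,q}(\Rn)\hookrightarrow L_{p^*,\infty}(\Rn)$ with $1/p^*=1/p-s/n$, whence $t^{1/p^*}f^*(t)\lesssim\|f\|_{B^s_{p,q}}$, i.e. $f^*(t)\lesssim t^{s/n-1/p}\|f\|_{B^s_{p,q}}$. In the critical case $s=n/p$ with $q>1$ the sharp target is the Lorentz--Zygmund space $L_{\infty,q;-1}(\Rn)$ (locally), whose growth envelope is $|\log t|^{1/q'}$; this gives $f^*(t)\lesssim|\log t|^{1/q'}\|f\|_{B^{n/p}_{p,q}}$ for small $t$. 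In the remaining cases ($s>n/p$, or $s=n/p$ and $q\leq1$) one has $B^s_{p,q}(\Rn)\hookrightarrow L_\infty(\Rn)$, so $f^*(t)\leq\|f\|_\infty\lesssim\|f\|_{B^s_{p,q}}$. A self-contained route to the same three estimates is to telescope $f^*(t)=f^*(1)+\sum_m[f^*(2^m t)-f^*(2^{m+1}t)]$ and bound each gap by $(2^m t)^{-1/p}\omega_p(f,c(2^m t)^{1/n})$; writing the result as $\sum_m r_m^{s-n/p}\beta_m$ with $r_m=(2^m t)^{1/n}$ and $\beta_m=r_m^{-s}\omega_p(f,cr_m)$ (so that $(\sum_m\beta_m^q)^{1/q}\lesssim\|f\|_{B^s_{p,q}}$), a single application of H\"older's inequality (or of $\ell^q\hookrightarrow\ell^1$ when $q\leq1$) then produces $t^{s/n-1/p}$, $|\log t|^{1/q'}$ or a constant according to whether $s-n/p$ is negative, zero or positive.

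For the lower bounds I would exhibit near-extremal functions. In the subcritical case take the rescaled bump $g_\tau:=\psi(\cdot/\tau^{1/n})$, where $\psi\in C_0^\infty(\Rn)$, $\psi\geq0$, $\psi(0)=1$; a direct computation of $\|g_\tau\|_p$ and of $\omega_p(g_\tau,t)\approx\min(t\,\tau^{-1/n},1)\|g_\tau\|_p$ gives $\|g_\tau\|_{B^s_{p,q}}\approx\tau^{1/p-s/n}$, while $g_\tau^*(\tau)\gtrsim1$, so that $\mathcal{E}(\tau)\geq g_\tau^*(\tau)/\|g_\tau\|_{B^s_{p,q}}\gtrsim\tau^{s/n-1/p}$. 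In the remaining cases a single fixed normalized bump already satisfies $f^*(\tau)\to\|f\|_\infty>0$, giving $\mathcal{E}(\tau)\gtrsim1$. The critical case is the delicate one: I would superpose dyadically rescaled bumps, $f:=\sum_{j=0}^N a_j\psi(2^j\cdot)$ with $2^{-N}\approx\tau^{1/n}$, so $N\approx|\log\tau|$. Because the dilated bumps are localized at frequencies $|\xi|\sim2^j$, they occupy essentially distinct Littlewood--Paley blocks and the Besov quasi-norm decouples as $\|f\|_{B^{n/p}_{p,q}}\approx(\sum_{j=0}^N a_j^q)^{1/q}$; taking $a_j=(N+1)^{-1/q}$ makes this $\approx1$, while $f(0)=\sum_{j=0}^N a_j=(N+1)^{1/q'}$ and $f\gtrsim f(0)$ on a ball of radius $\sim2^{-N}$ (of measure $\sim\tau$), so $f^*(\tau)\gtrsim|\log\tau|^{1/q'}$ and hence $\mathcal{E}(\tau)\gtrsim|\log\tau|^{1/q'}$. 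The optimization $\max\{\sum a_j:\sum a_j^q=1\}=(N+1)^{1/q'}$ is precisely where the exponent $1/q'$ --- and the hypothesis $q>1$ --- enter.

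I expect the main obstacle to be the critical case $s=n/p$, at both ends. On the upper side, if one insists on a self-contained argument, the crux is the pointwise comparison $t^{1/p}(f^*(t)-f^*(2t))\lesssim\omega_p(f,ct^{1/n})$ relating a gap in the level sets of $f$ to its $L_p$ modulus of continuity; this is a genuinely measure-theoretic estimate (small level sets must carry their oscillation already at the linear scale $t^{1/n}$), and it is what makes the borderline summation, with its $(N+1)^{1/q'}$ factor, converge only for $q>1$. On the lower side, the delicate point is justifying the decoupling $\|f\|_{B^{n/p}_{p,q}}\approx(\sum_j a_j^q)^{1/q}$ for the superposition, which rests on the frequency separation of the dilated bumps together with the equivalence (for $0<s<1$) of the difference-based and Littlewood--Paley quasi-norms. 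The subcritical and supercritical cases, by contrast, become routine once the embeddings and the single-bump estimate are in hand.
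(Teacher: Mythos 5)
The paper offers no proof of this proposition at all: it is quoted directly from Haroske and Schneider \cite[Prop.~2.5, Thm.~2.7]{HarSch09}. So your argument is necessarily a different route, and in outline it is the standard one: matching upper bounds (embedding estimates) and lower bounds (near-extremal functions), with the three regimes governed by the sign of $s-n/p$ and, on the borderline, by $q$. The self-contained telescoping route for the upper bound is sound: the gap inequality $f^*(t)-f^*(2t)\lesssim t^{-1/p}\,\omega_{L_p}(f,ct^{1/n})$, which you rightly identify as the crux, is a genuine (Kolyada-type) rearrangement inequality, provable by averaging over shifts $|h|\lesssim t^{1/n}$, and your ensuing case analysis (geometric domination for $s\neq n/p$, H\"older giving the factor $(M+1)^{1/q'}$ at $s=n/p$, and $\ell^q\hookrightarrow\ell^1$ for $q\le1$) is correct. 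The subcritical and supercritical lower bounds via a single rescaled bump are also fine. One caution on the embedding-based variant of the upper bound: the spaces in this paper are \emph{defined by differences}, and your claimed equivalence with the Littlewood--Paley quasi-norm fails when $p<1$ and $s\le n(1/p-1)$, a regime the proposition covers (e.g.\ $n=1$, $p=1/2$, $s=9/10$); so either one cites embeddings proved for the difference-defined scale (as in \cite{HarSch09}), or one sticks to your telescoping argument, which never leaves the difference setting.

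The one step that would fail as written is your justification of the critical-case lower bound. Dilated non-negative bumps $\psi(2^j\cdot)$ do \emph{not} occupy distinct Littlewood--Paley blocks: the Fourier transform of $\psi(2^j\cdot)$ is $2^{-jn}\hat{\psi}(2^{-j}\xi)$, concentrated in the ball $|\xi|\lesssim 2^j$ with a large contribution near the origin, so the frequency supports of all the summands overlap and no decoupling argument applies. Fortunately only the one-sided bound $\|f\|_{B^{n/p}_{p,q}}\lesssim\big(\sum_j a_j^q\big)^{1/q}$ is needed, and it follows by direct computation with differences: by the (quasi-)triangle inequality, $\omega_{L_p}(f,2^{-k})\lesssim \sum_j a_j\,2^{-jn/p}\min(2^{j-k},1)$, hence $2^{kn/p}\,\omega_{L_p}(f,2^{-k})\lesssim\sum_j K(k-j)\,a_j$ with a kernel $K(m)\lesssim 2^{-|m|\min\{n/p,\,1-n/p\}}$; since $0<s=n/p<1$ both exponents are positive, so $K$ decays geometrically, and Young's convolution inequality on $\ell^q$ (or its $q$-power analogue when $q\le1$, and the $p$-power analogue of the triangle inequality when $p<1$) gives the claim. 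Note that this is precisely where the standing hypothesis $s<1$ enters. With that repair, your choice $a_j=(N+1)^{-1/q}$, $N\approx|\log\tau|$, does give $f^*(\tau)\gtrsim|\log\tau|^{1/q'}$ and completes case (ii); the rest of the proposal stands.
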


Using Proposition \ref{pro:growthenv} and our results, we obtain compact embeddings of these Besov spaces.

\begin{thm}
\label{Besovcompact}
Let $0<s<1$ and $0<p,q,r,u\leq\infty$. Let $\Omega$
be a~bounded measurable subset of $\Rn$, $|\Omega|_{n}>0$, and
$w\in\mathcal{W}(0,|\Omega|_{n}).$ Assume that \eqref{eq:EGO(2.1)}, \eqref{eq:EGO(2.5)} and one of the following
conditions are satisfied:

\emph{(i)} $s<\frac{n}{p}\,$ and $\,\lim_{\delta\to0+}\|t^{1/r-1/u-1/p+s/n}w(t)\|_{u;(0,\delta)}=0$;

\emph{(ii)} $s=\frac{n}{p}$, $q>1\,$ and $\,\lim_{\delta\to0+}\|t^{1/r-1/u}|\log t|^{1/q'}w(t)\|_{u;(0,\delta)}=0$;

\emph{(iii)} $s>\frac{n}{p}$, or $s=\frac{n}{p}$ and $0<q\leq1$, and $\,\lim_{\delta\to0+}\|t^{1/r-1/u}w(t)\|_{u;(0,\delta)}=0$. 

Then \[
B_{p,q}^{s}(\Rn)\hookrightarrow\hookrightarrow L_{r,u;w}(\Omega).\]
\end{thm}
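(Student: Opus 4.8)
The plan is to recognize the space $B^s_{p,q}(\Rn)$ as the abstract Besov space $B(L,Y)$ with $L=L_p(\Rn)$ and $Y$ the space with $\|g\|_Y=\|t^{-s-1/q}g(t)\|_{q;(0,1)}$, exactly as recalled just before the statement, and then to feed this into Corollary~\ref{cor:KsubUAC(Z)} and Remark~\ref{rem:KsubUAC(Z)} with the target $Z(\Omega)=L_{r,u;w}(\Omega)$. First I would check the structural hypotheses. The space $L_p(\Rn)$ is a power q-BFS: for $p\ge1$ it is a BFS, while for $0<p<1$ it was shown in Section~\ref{sec:4} to be a power q-BFS, and in that range it has absolutely continuous quasi-norm, so $L=L_a$ as required by Corollary~\ref{cor:KsubUAC(Z)} in the non-BFS case. (When $p=\infty$, only case~(iii) below can occur, and there $L_\infty$ is itself a BFS.) By Remark~\ref{rem:KsubUAC(Z)} it then suffices to prove that
\[
K:=\{f|_\Omega:\ f\in B^s_{p,q}(\Rn),\ \|f\|_{B^s_{p,q}(\Rn)}\le1\}\subset UAC(L_{r,u;w}(\Omega)),
\]
since this alone forces the restriction operator to take $B^s_{p,q}(\Rn)$ compactly into $L_{r,u;w}(\Omega)$.

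To establish this membership I would apply Proposition~\ref{pro:limsupint} with the exponents $r,u$ playing the roles of $p,q$ there. This reduces matters to verifying
\[
\lim_{\delta\to0+}\ \sup_{f\in K}\ \|t^{1/r-1/u}\,w(t)\,(f|_\Omega)^*(t)\|_{u;(0,\delta)}=0.
\]
The key reduction is to pass from the rearrangement of the restriction to the growth envelope of the whole space. Restriction to a subset can only decrease the distribution function, so $(f|_\Omega)^*(t)\le f^*(t)$ for every $t\ge0$; consequently, writing $E(t):=\sup_{\|g\|_{B^s_{p,q}(\Rn)}\le1}g^*(t)$, one has $(f|_\Omega)^*(t)\le E(t)$ uniformly in $f\in K$. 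Monotonicity of the $L_u$-quasi-norm then yields, for every $f\in K$ and every $\delta\in(0,|\Omega|_n)$,
\[
\|t^{1/r-1/u}w(t)(f|_\Omega)^*(t)\|_{u;(0,\delta)}\le\|t^{1/r-1/u}w(t)E(t)\|_{u;(0,\delta)}.
\]

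It remains to insert the growth envelope from Proposition~\ref{pro:growthenv}, which controls $E(t)$ as $t\to0+$, and to match each regime with the corresponding hypothesis. If $s<n/p$, then $E(t)\approx t^{-1/p+s/n}$ for small $t$, so the right-hand side above is $\lesssim\|t^{1/r-1/u-1/p+s/n}w(t)\|_{u;(0,\delta)}$, which tends to $0$ by hypothesis~(i); if $s=n/p$ and $q>1$, then $E(t)\approx|\log t|^{1/q'}$ and hypothesis~(ii) gives the limit $0$; in all remaining cases $E(t)\approx1$ and hypothesis~(iii) applies. In each case, taking the supremum over $f\in K$ of the left-hand side of the last display and letting $\delta\to0+$ yields the required limit, whence $K\subset UAC(L_{r,u;w}(\Omega))$ by Proposition~\ref{pro:limsupint}, and the compact embedding $B^s_{p,q}(\Rn)\hookrightarrow\hookrightarrow L_{r,u;w}(\Omega)$ follows from Remark~\ref{rem:KsubUAC(Z)}.

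The conceptual work having already been done in the earlier sections, I do not expect a deep obstacle here; the proof is essentially an assembly. The points requiring care are the two elementary reductions --- the pointwise rearrangement inequality $(f|_\Omega)^*\le f^*$ and the uniform domination by the envelope $E$ via monotonicity of the quasi-norm --- together with the bookkeeping that matches each of the three growth-envelope regimes of Proposition~\ref{pro:growthenv} to precisely one of the hypotheses~(i)--(iii). One should also confirm that for $\delta$ small the envelope asymptotics hold throughout $(0,\delta)$ (so the $\approx$ may be replaced by $\lesssim$ inside the quasi-norm) and keep track of the limiting exponents $p,q,u\in\{\infty\}$, all of which are routine.
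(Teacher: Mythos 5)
Your proposal is correct and follows essentially the same route as the paper: the paper's (very condensed) proof consists of exactly your key inequality $\sup_{\|f\|_{B_{p,q}^{s}(\Rn)}\leq1}\|t^{1/r-1/u}w(t)(f|_{\Omega})^{*}(t)\|_{u;(0,\delta)}\leq\|t^{1/r-1/u}w(t)\sup_{\|f\|_{B_{p,q}^{s}(\Rn)}\leq1}f^{*}(t)\|_{u;(0,\delta)}$, combined with Propositions \ref{pro:growthenv} and \ref{pro:limsupint}, Corollary \ref{cor:KsubUAC(Z)} and Remark \ref{rem:KsubUAC(Z)}. Your additional verifications (that $L_p(\Rn)$ is a power q-BFS with $L=L_a$ when $0<p<1$, and the regime-by-regime matching) are details the paper leaves implicit, and they are carried out correctly.
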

\begin{proof}
Observing that\begin{eqnarray*}
\lefteqn{0\leq\sup_{\|f\|_{B_{p,q}^{s}(\Rn)}\leq1} \| t^{1/r-1/u}w(t)(f|_{\Omega})^{*}(t) \|_{u,(0,\delta)}}\\
 & \leq & \| t^{1/r-1/u}w(t)\sup_{\|f\|_{B_{p,q}^{s}(\Rn)}\leq1}f^{*}(t) \|_{u;(0,\delta)},\end{eqnarray*}
the conclusion follows from Propositions \ref{pro:growthenv}, \ref{pro:limsupint}, Corollary
\ref{cor:KsubUAC(Z)} and Remark~\ref{rem:KsubUAC(Z)}.
\end{proof}
\begin{rem}
When more precise estimates than the growth envelope functions are known, our approach may lead to weaker sufficient conditions for compactness. For example, when $s<n/p$ (with $0<s<1$) and
$0<q\leq u\leq \infty$, we have, for small $\delta>0$, from \cite[Cor. 3.3]{HarSch09} that
\[
\| t^{1/p-s/n-1/u}f^{*}(t) \|_{u;(0,\delta)} \lesssim\|f\|_{B_{p,q}^{s}(\Rn)}\quad\mbox{for all }\, f\in B_{p,q}^{s}(\Rn).
\]
Consequently,
\[
\sup_{\|f\|_{B_{p,q}^{s}(\Rn)}\leq1} \| t^{1/p-s/n-1/u}f^{*}(t) \|_{u;(0,\delta)} \lesssim1\quad\mbox{for all small }\,\delta>0.\]
Therefore, with $\frac{1}{r}:=\frac{1}{p}-\frac{s}{n}$, 
\begin{eqnarray*}
0 & \leq & \sup_{\|f\|_{B_{p,q}^{s}(\Rn)}\leq1} \| t^{1/r-1/u}w(t)(f|_{\Omega})^{*}(t) \|_{u;(0,\delta)}\\
 & \lesssim & \mbox{ess sup}_{t\in(0,\delta)}w(t).
 \end{eqnarray*}
Hence, the assumption $\lim_{t\to0+}w(t)=0$ (for the mentioned
conjugation of parameters) guarantees that $B_{p,q}^{s}(\Rn)\hookrightarrow\hookrightarrow L_{r,u;w}(\Omega)$.
\end{rem}

\section*{Acknowledgements:}

We thank the Departments of Mathematics of the University of Coimbra for a~kind hospitality during our stays there.

\noindent
We also thank a referee for valuable comments.


\begin{thebibliography}{10}

\bibitem{AdaFou05}
R.~Adams and J.~Fournier.
\newblock {\em Sobolev spaces}.
\newblock Elsevier, Amsterdam, 2005.

\bibitem{Aoki42}
T.~Aoki.
\newblock Locally bounded linear topological spaces.
\newblock {\em Proc. Imp. Acad. Tokyo}, 18:588--594, 1942.

\bibitem{BS88}
C.~Bennett and R.~Sharpley.
\newblock {\em Interpolation of operators}.
\newblock Academic Press, Boston, 1988.

\bibitem{CRS07}
M.~J. Carro, J.~A. Raposo, and J.~Soria.
\newblock {Recent Developments in the Theory of Lorentz Spaces and Weighted
  Inequalities}.
\newblock {\em Mem. Amer. Math. Soc.}, 187, 2007.

\bibitem{CS93}
M.~J. Carro and J.~Soria.
\newblock Weighted {L}orentz spaces and the {H}ardy operator.
\newblock {\em J. Funct. Anal.}, 112:480--494, 1993.

\bibitem{DeVoLo93}
R.~A.~DeVore and G.~G. Lorentz.
\newblock {\em Constructive Approximation}.
\newblock Springer-Verlag, Berlin, 1993.

\bibitem{DS57}
N.~Dunford and J.~T. Schwartz.
\newblock {\em Linear operators, part {I}}.
\newblock Interscience, New York, 1957.

\bibitem{EGO97}
D.~Edmunds, P.~Gurka, and B.~Opic.
\newblock On embeddings of logarithmic {B}essel potential spaces.
\newblock {\em J. Funct. Anal.}, 146:116--150, 1997.

\bibitem{ET96}
D.~E. Edmunds and H.~Triebel.
\newblock {\em Function Spaces, Entropy Numbers and Differential Operators}.
\newblock Cambridge University Press, 1996.

\bibitem{GPS12}
A.~Gogatishvili, L.~Pick, and J.~Schneider.
\newblock Characterization of a~rearrangement-invariant hull of a {B}esov space
  via interpolation.
\newblock {\em Rev. Mat. Complut.}, 25:267--283, 2012.

\bibitem{HarSch09}
D.D. Haroske and C.~Schneider.
\newblock Besov spaces with positive smoothness on $\mathbb{R}^n$, embeddings
  and growth envelopes.
\newblock {\em J. Approx. Theory}, 161(2):723--747, 2009.

\bibitem{Kal84}
N.~Kalton.
\newblock Convexity conditions for nonlocally convex lattices.
\newblock {\em Glasgow Math. J.}, 25:141--152, 1984.

\bibitem{KM04}
A.~Kami{\'n}ska and L.~Maligranda.
\newblock On {Lorentz} spaces {$\Gamma_{p,w}$}.
\newblock {\em Isr. J. Math.}, 140:285--318, 2004.

\bibitem{KamMal04}
A.~Kami{\'n}ska and L.~Maligranda.
\newblock Order convexity and concavity of {L}orentz spaces.
\newblock {\em Stud. Math.}, 160:267--286, 2004.

\bibitem{KZPS76}
M.~Krasnoselskii, P.~Zabreiko, E.~Pustylnik, and P.~Sobolevskii.
\newblock {\em Integral operators in spaces of summable functions}.
\newblock Noordhoff, Leyden, 1976.

\bibitem{KPS82}
S.~Kre\u{\i}n, Ju.~Petunin, and E.~Semenov.
\newblock {\em Interpolation of linear operators}.
\newblock Amer. Math. Soc., Providence, 1982.

\bibitem{KR61}
M.~Krasnosel'ski\u{\i} and Ja. Ruticki\u{\i}.
\newblock {\em {Convex functions and Orlicz spaces}}.
\newblock P.~Noordhoff Ltd., Groningen, 1961.

\bibitem{LT78}
J.~Lindenstrauss and L.~Tzafriri.
\newblock {\em Classical Banach Spaces II}.
\newblock Springer-Verlag, Berlin, 1978.

\bibitem{Sal08}
Salvador~Rodrígues Lópes.
\newblock {\em Transference theory between quasi-Banach function spaces with
  applications to the restriction of Fourier multipliers}.
\newblock PhD thesis, Universitat de Barcelona, 2008.

\bibitem{Pie07}
A.~Pietsch.
\newblock {\em History of  Banach Spaces and Linear Operators}.
\newblock Birkh{\"a}user, Boston, 2007.

\bibitem{Pus06}
E.~Pustylnik.
\newblock On compactness of {Sobolev} embeddings in rearrangement-invariant
  spaces.
\newblock {\em Forum Math.}, 18:839--852, 2006.

\bibitem{RafSam08}
H.~Rafeiro and S.~Samko.
\newblock Dominated compactness theorem in {B}anach function spaces and its
  applications.
\newblock {\em Complex Anal. Oper. Theory}, 2(4):669--681, 2008.

\bibitem{Ro57}
S.~Rolewicz.
\newblock On a certain class of linear metric spaces.
\newblock {\em Bull. Acad. Polon. Sci. Cl. III}, 5:471--473, 1957.


\end{thebibliography}

\end{document}